\newtheorem{claim}{}[section]
\newtheorem{defn}[claim]{Definition}
\newtheorem{thm}[claim]{Theorem}
\newtheorem{remark}[claim]{Remark}
\newtheorem{prop}[claim]{Proposition}
\newtheorem{cor}[claim]{Corollary}
\title[Geometric properties of the subordination function]{Some geometric properties of the subordination 
function associated to an operator-valued free convolution semigroup}
\author{Serban Teodor Belinschi}
\address{CNRS - Institut de Math\'ematiques de Toulouse\\
118 Route de Narbonne\\ 31062 Toulouse, FRANCE}
\email{serban.belinschi@math.univ-toulouse.fr}
\thanks{}
\begin{document}

\begin{abstract}
In his article {\em On the free convolution with a semicircular 
distribution}, Biane found very useful characterizations of the 
boundary values of the imaginary part of the Cauchy-Stieltjes transform 
of the free additive convolution of a probability measure on $\mathbb R
$ with a Wigner (semicircular) distribution. Biane's methods were 
recently extended by Huang to measures which belong to the partial 
free convolution semigroups introduced by Nica and Speicher. This
note further extends some of Biane's methods and results to free 
convolution powers of operator-valued distributions and to free 
convolutions with operator-valued semicirculars. In addition, it
investigates properties of the Julia-Carath\'eodory derivative of
the subordination functions associated to such semigroups, 
extending results from \cite{BB}.
\end{abstract}

\maketitle

\section{Introduction}

Free probability, introduced by Voiculescu in order to 
study free group factors, gained considerable importance
after the discovery in \cite{VInv} of the connection between
freeness and the asymptotic behaviour of large random matrices.
One of the most significant consequences of the main result of
\cite{VInv} is the fact that two independent selfadjoint random 
matrices $H_N,A_N$ - $H_N$ being a gaussian matrix - are asymptotically
free as $N\to\infty$. Thanks to Wigner's work, it is known since the
'50s that the asymptotic distribution of $H_N$ as $N\to\infty$ is
the semicircle law. In particular, the distribution of 
$A_N+H_N$ is modeled by Voiculescu's free additive 
convolution \cite{DVJFA} of a standard semicircular 
distribution with the limiting distribution of $A_N$. 
In \cite{Biane2}, this convolution is analyzed in great
detail: among others, a formula for the density of the
corresponding distribution is provided, and it is shown 
that this density is bounded, continuous and analytic wherever
positive. However, in order to study the asymptotic eigenvalue distribution of more general selfadjoint 
polynomials $P(A_N,H_N)$ it is necessary to consider the more general framework of free convolutions
of {\em operator-valued distributions} \cite{V*,S1,S2,HRS,BMS}.
In the present note, we find certain operator-valued
counterparts of Biane's results from \cite{Biane2};
necessarily, several of the conclusions of \cite{Biane2}
do not hold in this more general setting. 

As it is shown in \cite{ABFN}, there exists an intimate connection
between the free additive convolution with an operator-valued 
semicircular distribution and the free convolution powers of
operator-valued distributions. In particular, it turns out 
that the analytic tools used in the study of free convolution
powers of operator-valued distributions are generalizations of
the analytic tools used in the study of the free convolution with an
operator-valued semicircular distribution. Thus, we 
write our proofs in the more general context. This
has the advantage of allowing us to draw conclusions 
about more general free additive convolutions of operator-valued
distributions.

The second section is dedicated to introducing the main
concepts and tools we require. We state and prove our main
results in the third and fourth section. 

\section{Noncommutative functions, distributions and convolutions}

\subsection{Noncommutative probability spaces and distributions}
Following D. Voiculescu \cite{DVJFA,V*}, by a noncommutative 
probability space we understand a pair $(\mathcal A, \tau)$ where 
$\mathcal A$ is a unital $\ast$-algebra over $\mathbb C$ 
and $\tau\colon\mathcal A\to\mathbb{C}$ is a positive linear functional 
with $\tau(1)=1$. Let $\mathcal B$ be a unital C$^\ast$-algebra. 
A $\mathcal B$-valued non-commutative probability space is a triple 
$(\mathcal A,\mathbb E_\mathcal B,\mathcal B)$, where $\mathcal A$ 
is a unital $\ast$-algebra containing $\mathcal B$ as a 
$\ast$-subalgebra and $\mathbb E_\mathcal B$ is a unit-preserving 
positive conditional expectation from $\mathcal A$ onto $\mathcal B$ 
(in particular, the units of $\mathcal A$ and $\mathcal B$ coincide). 
If $\mathcal B\subset\mathcal A$ is an inclusion of unital 
C${}^*$-algebras, then we call $(\mathcal A,\mathbb E_\mathcal B,
\mathcal B)$ a $\mathcal B$-valued noncommutative 
C${}^*$-probability space. For simplicity, we will suppress the 
subscript of $\mathbb E_\mathcal B$ whenever there is no risk of 
confusion, and denote our conditional expectation by $\mathbb E$. 
Elements $X\in\mathcal A$ are called {\emph{random variables} or 
(in the second context) {\emph{$\mathcal B$-valued} (or 
\emph{operator-valued}) \emph{random variables}.

We use the notation
$\mathcal B\langle\mathcal X_1,\mathcal X_2,\dots\mathcal X_n\rangle$ 
for the $\ast$-algebra freely generated by $\mathcal B$ and the 
noncommuting selfadjoint symbols $\mathcal X_1,\mathcal X_2,\dots,
\mathcal X_n$. If $X\in\mathcal A$ is a selfadjoint element, then we will also use the notation $\mathcal B\langle X\rangle$ for the 
$\ast$-algebra generated by $X$ and $\mathcal B$. Following \cite{ABFN}
we denote the set of all positive, unit preserving, conditional 
expectations from $\mathcal B\langle\mathcal X\rangle$ to $\mathcal B$ 
by $\Sigma(\mathcal B)$. 
Given $\mu\in\Sigma(\mathcal B)$, its $n^{\rm th}$ moment is the
$n-1$-linear map $\mu_n\colon\mathcal B\times\cdots\times\mathcal B
\to\mathcal B$ defined by $\mu_n(b_1,b_2,\dots,b_{n-1}):=
\mu\left[\mathcal Xb_1\mathcal Xb_2\cdots\mathcal Xb_{n-1}\mathcal 
X\right].$ We define the zeroth moment to be $1\in\mathcal B$
and the first moment to be $\mu\left[\mathcal X\right]\in\mathcal B$.
We also denote  $\Sigma_0(\mathcal B)$ the set of all 
$\mu\in\Sigma(\mathcal B)$ whose moments do not grow faster than 
exponentially, that is, all $\mu\in\Sigma(\mathcal B)$ for which
there exists some $M>0$ such that, for all  
positive integers $m$, all $b_1,\dots,b_n\in M_m(\mathcal B)$ and 
$\mathcal X_m=\mathcal X\otimes1_m$ we have that
$$
\|(\mu\otimes{\rm Id}_m)(\mathcal X_mb_1\mathcal X_mb_2\cdots\mathcal
X_m b_n\mathcal X_m)\|< M^{n+1}\|b_1\|\cdots\|b_n\|.
$$ 
If $(\mathcal A,\mathbb E,\mathcal B)$ is a $\mathcal B$-valued 
noncommutative probability space and $X=X^*\in\mathcal A$, we define
its distribution with respect to $\mathbb E$ to be the element 
$\mu_X\in\Sigma_0(\mathcal B)$ satisfying
$$
\mu_X(P(\mathcal X))=\mathbb E(P(X))\ \text{for all}\ P(\mathcal X)\in
\mathcal B\langle\mathcal X\rangle.
$$
If $X$ belongs to a $\mathcal B$-valued 
noncommutative C${}^*$-probability space, then $\mu_X\in\Sigma_0(
\mathcal B)$. Conversely, as shown by Voiculescu in \cite{V*}, if
$\mu\in\Sigma_0(\mathcal B)$, then there exist a $\mathcal B$-valued 
C${}^*$-noncommutative probability space containing an element $X=X^*$ 
such that $\mu_X=\mu$. In the simpler case $\mathcal B=\mathbb C$,
$\mu_X$ can be identified with a Borel probability measure
supported on the compact set $\sigma(X),$ the spectrum of $X$ (see 
\cite{akhieser}).

\subsection{Free independence and relevant transforms}
We present next the free independence, and define the relevant analytic 
transforms, in a C${}^*$-algebraic context, as this is the context that 
is considered most often in this paper. 

\begin{defn}\label{freeness}
Let $(\mathcal A, \mathbb E,\mathcal B)$ be a $\mathcal B$-valued 
noncommutative C${}^*$-probability space and $\{X_i\}_{i\in I}$ be a 
family of selfadjoint elements from $\mathcal A$.
The family $\{X_i\}_{i\in I}$ is said to be \emph{freely independent}
(or just {\em free}) over $\mathcal B$ with respect to $\mathbb E$ 
if for any $n\in\mathbb N$, $\mathbb E[A_1\cdots A_n]=0$ whenever 
$A_j\in\mathcal B\langle X_{\iota(j)}\rangle\cap\ker(\mathbb E)$, 
$\iota(j)\in I$, $\iota(k)\neq\iota(k+1)$ for all 
$k\in\{1,\dots,n-1\}$. 
\end{defn}
If $X, Y$ are two freely independent $\mathcal B$-valued 
noncommutative random variables, then $\mu_{X+Y}$ depends only on 
$\mu_X$ and $\mu_Y$ and is called the free additive convolution 
of $\mu_X$ and $\mu_Y$. We denote $\mu_{X+Y}$ by $\mu_X\boxplus
\mu_Y$.

It is natural to denote $\underbrace{\mu\boxplus\cdots\boxplus\mu}_{n
\text{ times}}$ by $\mu^{\boxplus n}$. Obviously, $\{\mu^{\boxplus n}|n
\in\mathbb N\}$ forms a discrete semigroup. A remarkable result of Nica 
and Speicher \cite{NS} states that for any Borel probability measure 
$\mu$ on $\mathbb R$, there exists a partial semigroup, i.e. a family 
$\{\mu^{\boxplus t}\colon t\ge1\}$ such that $\mu^{\boxplus 1}=\mu$ 
and $\mu^{\boxplus s+t}=\mu^{\boxplus s}\boxplus\mu^{\boxplus t}$, 
$s,t\ge1$ (see also \cite{BVSuper}). This result has been extended 
by Curran \cite{Curran} to certain operator-valued distributions.
However, as it will be seen below, in the operator-valued context,
analytic transforms indicate that it should be possible - or rather
natural - to consider convolution powers indexed by a subset of the
set of completely positive self-maps of $\mathcal B$. The main result 
of \cite{ABFN} states precisely that: given $\mu\in\Sigma_0(\mathcal B)
$, there exists a family
$$
\left\{\mu^{\boxplus\alpha}|\alpha\colon\mathcal B\to\mathcal B
\text{ completely positive, }\alpha-\text{Id}_\mathcal B
\text{ completely positive}\right\}\subset\Sigma_0(\mathcal B)
$$
such that $\mu^{\boxplus\text{Id}_\mathcal B}=\mu$ and
$\mu^{\boxplus\alpha+\beta}=\mu^{\boxplus\alpha}\boxplus
\mu^{\boxplus\beta}$. Moreover, as shown in 
\cite[Corollary 7.6]{ABFN}, whenever $\mu\in\Sigma_0(\mathcal B)$
is $\boxplus$-infinitely divisible, $\mu^{\boxplus\alpha}\in
\Sigma_0(\mathcal B)$ for {\em any} completely positive $\alpha
\colon\mathcal B\to\mathcal B$.

For the computation of free convolutions, Voiculescu 
\cite{DVJFA,V*} introduced the $R$-transform. In order
to define it, let
\begin{equation}\label{G}
G_\mu(b)=\mu\left[(b-\mathcal X)^{-1}\right], \quad
\Im b>0.
\end{equation}
Here we denote $\Im b=(b-b^*)/2i$, $\Re b=(b+b^*)/2$,
and we write $a>0$ if $a=a^*$ and $\sigma(a)\subset(0,+\infty)$.
The notation $a\ge0$ is used when we require only that
$a=a^*$ and $\sigma(a)\subset[0,+\infty)$.
If $\mu\in\Sigma_0(\mathcal B)$, then $M_\mu(b)=
\mu\left[(1-b\mathcal X)^{-1}b\right]=G_\mu(b^{-1})$
has an analytic continuation to a neighbourhood of zero
and maps $0$ to itself. A simple computation shows that
$M_\mu'(0)=\text{Id}_\mathcal B$, so that, by the inverse
function theorem for Banach spaces, $M_\mu$ has a unique 
compositional inverse, denoted by $M_\mu^{\langle-1\rangle}$, 
around zero which maps zero to itself. Thus,
both $b^{-1}M_\mu^{\langle-1\rangle}(b)$ and 
$M_\mu^{\langle-1\rangle}(b)b^{-1}$ are analytic around zero
and map zero to one.
The $R$-transform of $\mu$ is defined via the formula
$bR_\mu(b)=(M_\mu^{\langle-1\rangle}(b)b^{-1})^{-1}-1$. 
We prefer a slightly different form of the definition 
of $R_\mu$, namely
\begin{equation}\label{R}
R_\mu(b)=G^{\langle-1\rangle}_\mu(b)-b^{-1}.
\end{equation}
This formula is well-defined on an open set which has zero in
its closure, and thus determines $R_\mu$.
The essential property of the $R$-transform, found by
Voiculescu, is that
$$
R_\mu(b)+R_\nu(b)=R_{\mu\boxplus\nu}(b)
$$
on a sufficiently small neighbourhood of zero in $\mathcal B$. Clearly 
then, for any linear completely positive map $\alpha\colon\mathcal B
\to\mathcal B$ such that $\alpha-\text{Id}_\mathcal B$ is still 
completely positive, $\mu^{\boxplus\alpha}$ will be given by
\begin{equation}\label{alpha}
R_{\mu^{\boxplus\alpha}}(b)=\alpha(R_\mu(b)),
\end{equation}
on a neighbourhood of zero. It has been shown in
\cite{ABFN} that such a $\mu^{\boxplus\alpha}\in\Sigma_0
(\mathcal B)$ exists for any $\mu\in\Sigma_0
(\mathcal B)$. A different, simpler proof of this 
result is given in \cite{S3}, where it is also shown that the 
requirement that $\alpha-\text{Id}_\mathcal B$ is itself completely
positive cannot be generally omitted.

It is quite obvious from \eqref{G} that $G_\mu$ plays a role
similar to that of the Cauchy-Stieltjes transform in classical
probability. However, unlike the classical Cauchy-Stieltjes transform,
$G_\mu$ alone does not generally encode all of the distribution $\mu$, 
but only its symmetric moments. It has been a crucial insight of 
Voiculescu that $G_\mu$ is just the first level of a noncommutative
function that {\em does} encode all of $\mu$: this will be outlined
in the next subsection.

\subsection{Noncommutative functions and transforms}
In this subsection we largely follow \cite{BPV1,popavinnikov}
in describing the noncommutative extensions of the analytic transforms
introduced in the previous subsection, and \cite{ncfound} in the
definition of noncommutative sets and functions. We refer to these
three articles and \cite{V2,V1} for details on, and proofs
of, the statements below.

If $S$ is a nonempty set, we denote by $M_{m\times n}
(S)$ the set of all matrices with $m$ rows and $n$ columns having 
entries from $S$. For simplicity, we let $M_n(S):=M_{n\times n}(S)$. 
Given C${}^*$-algebra $\mathcal B$, a {\em noncommutative set} is a 
family $\Omega:=(\Omega_n)_{n\in\mathbb N}$
such that
\begin{enumerate}
\item[(a)] for each $n\in\mathbb N$, $\Omega_n\subseteq M_n
(\mathcal B);$
\item[(b)] for each $m,n\in\mathbb N$, we have $\Omega_m\oplus
\Omega_n\subseteq\Omega_{m+n}$.
\end{enumerate}
The noncommutative set $\Omega$ is called {\em right admissible} if in
addition the condition (c) below is satisfied:
\begin{enumerate}
\item[(c)] for each $m,n\in\mathbb N$ and $a\in\Omega_m,b\in
\Omega_n,w\in M_{m\times n}(\mathcal B)$, there is an $\epsilon>0$
such that $\left(\begin{array}{cc}
a & zw\\
0 & b\end{array}\right)\in\Omega_{m+n}$ for all $z\in\mathbb C,
|z|<\epsilon$.
\end{enumerate}
Given $C^*$-algebras $\mathcal{B,C}$ and a noncommutative
set $\Omega\subseteq\coprod_{n=1}^\infty M_n(\mathcal B)$, a {\em 
noncommutative function} is a family $f:=(f_n)_{n\in\mathbb N}$
such that $f_n\colon\Omega_n\to M_n(\mathcal C)$ and
\begin{enumerate}
\item $f_m(a)\oplus f_n(b)=f_{m+n}(a\oplus b)$ for all
$m,n\in\mathbb N$, $a\in\Omega_m,b\in\Omega_n$;
\item for all $n\in\mathbb N$, $f_n(T^{-1}aT)=T^{-1}f_n(a)T$ whenever
$a\in\Omega_n$ and $T\in GL_n(\mathbb C)$ are such that $T^{-1}aT$
belongs to the domain of definition of $f_n$.
\end{enumerate}
A remarkable result (see \cite[Theorem 7.2]{ncfound}) states that, 
under very mild conditions on $\Omega$, local boundedness for $f$ 
implies each $f_n$ is analytic as a map between Banach spaces.

As mentioned in the previous section, the function $G_\mu$
encodes only the symmetric part of the distribution $\mu$.
It was an extremely important remark of Voiculescu that
$G_\mu$ has a noncommutative extension:
\begin{equation}\label{Gfull}
G_\mu^{[n]}(b)=(\mu\otimes\text{Id}_n)\left[(b-\mathcal X\otimes1_n
)^{-1}\right],\quad n\in\mathbb N.
\end{equation}
There are two noncommutative sets which are natural domains of 
definition for $(G_\mu^{[n]}(b))_{n\in\mathbb N}$ and for $(G_\mu^{[n]}
(b^{-1}))_{n\in\mathbb N}$, respectively: the noncommutative operator 
upper half-plane $(\mathbb H^+(M_n(\mathcal B)))_{n\in\mathbb N}$, 
where $\mathbb H^+(M_n(\mathcal B))=\{b\in M_n(\mathcal B)\colon\Im b>0
\}$, and the set of nilpotent matrices with entries from $\mathcal B$, 
respectively. 
Remarkably, as shown in \cite{V2}, $G_\mu^{[n]}$ maps $\mathbb H^+(M_n
(\mathcal B))$ into $\mathbb H^-(M_n(\mathcal B)):=-\mathbb H^+(M_n
(\mathcal B))$ and $G_\mu^{[n]}(b^*)=G_\mu^{[n]}(b)^*$. It is clear 
that the restriction of $(G_\mu^{[n]})_{n\in\mathbb N}$ to either of 
these two noncommutative sets determines 
$(G_\mu^{[n]})_{n\in\mathbb N}$. For a description of 
how to explicitly recover $\mu$ from $(G_\mu^{[n]})_{n\in\mathbb N}$
via its moments, we refer to \cite{BPV1,popavinnikov}.

It follows from its definition that the $R$-transform has
itself a noncommutative extension, which determines $\mu$ uniquely.
The level-one relation 
\eqref{alpha} extends to $R_{\mu^{\boxplus\alpha}}^{[n]}(b)=(\alpha
\otimes\text{Id}_n)(R_{\mu}^{[n]}(b))$ for $b\in M_n(\mathcal B)$ of 
small enough norm. From this formula and the noncommutative extension
of \eqref{R} we obtain, by adding $b^{-1}$, the relation
$\left(G_{\mu^{\boxplus\alpha}}^{[n]}\right)^{\langle-1\rangle}(b)=
(\alpha\otimes\text{Id}_n)\left(\left(G_{\mu}^{[n]}\right)^{\langle-1
\rangle}(b)\right)-(\alpha\otimes\text{Id}_n-\text{Id}_\mathcal B
\otimes\text{Id}_n)(b^{-1}).$ Replacing $b$ by 
$G_{\mu^{\boxplus\alpha}}^{[n]}(b)$
provides $b=(\alpha\otimes\text{Id}_n)
\left(G_{\mu}^{[n]}\right)^{\langle-1\rangle}
\left(G_{\mu^{\boxplus\alpha}}^{[n]}(b)\right)-
(\alpha\otimes\text{Id}_n-\text{Id}_\mathcal B
\otimes\text{Id}_n)\left(G_{\mu^{\boxplus\alpha}}^{[n]}(b)^{-1}\right)
.$ With the notations
\begin{eqnarray}
F_{\mu}^{[n]}(b)&=&G_{\mu}^{[n]}(b)^{-1},\quad b\in\mathbb H^+
(M_n(\mathcal B)),\ n\in\mathbb N,\label{F}\\
h_{\mu}^{[n]}(b)&=&F_{\mu}^{[n]}(b)-b,\quad b\in\mathbb H^+
(M_n(\mathcal B)),\ n\in\mathbb N,\label{h}
\end{eqnarray}
and
\begin{equation}\label{7}
\omega_\alpha^{[n]}(b)=\left(G_{\mu}^{[n]}\right)^{\langle-1\rangle}
\left(G_{\mu^{\boxplus\alpha}}^{[n]}(b)\right),
\end{equation}
we re-write equation \eqref{alpha} as
\begin{equation}\label{omega}
\omega_\alpha^{[n]}(b)=b+\left[(\alpha-\text{Id}_\mathcal B)\otimes
\text{Id}_n\right]h_{\mu}^{[n]}\left(\omega_\alpha^{[n]}(b)\right),
\quad b\in\mathbb H^+(M_n(\mathcal B)),\ n\in\mathbb N,
\end{equation}
with $\omega_\alpha^{[n]}\colon\mathbb H^+(M_n(\mathcal B))\to
\mathbb H^+(M_n(\mathcal B))$.
The above argument for the existence of $(\omega_\alpha^{[n]})_{n\in
\mathbb N}$ is obviously not complete: for the rigorous
proof, we refer to \cite[Theorem 8.4]{ABFN}. This same theorem
also states that for any $b\in\mathbb H^+(M_n(\mathcal 
B))$, $\omega_\alpha^{[n]}(b)\in\mathbb H^+(M_n(\mathcal B))$
is the unique attracting fixed point of the map
$f_b^{[n]}\colon \mathbb H^+(M_n(\mathcal B))\to
\mathbb H^+(M_n(\mathcal B))$, $f_b^{[n]}(w)=b+
\left[(\alpha-\text{Id}_\mathcal B)\otimes
\text{Id}_n\right]h_{\mu}^{[n]}(w)$, and the right inverse
of the map $H^{[n]}\colon\mathbb H^+(M_n(\mathcal B))\to M_n(
\mathcal B)$, $H^{[n]}(w)=w-\left[(\alpha-\text{Id}_\mathcal B)\otimes
\text{Id}_n\right]h_{\mu}^{[n]}(w)$. 

Of importance in our analysis will be the following result of Popa
and Vinnikov \cite[Theorem 6.6]{popavinnikov}, re-phrased in terms
of the noncommutative function $h$:
\begin{thm}\label{pv}
Let $\mu\in\Sigma_0(\mathcal B)$ be given. Then there exists
a linear map $\eta_\mu\colon\mathcal B\langle\mathcal X\rangle\to
\mathcal B$ and $M\in(0,+\infty)$ such that for any $k\in\mathbb N$, 
$x_1,\dots,x_k\in\mathcal B\langle\mathcal X\rangle$, we have
$$
\left(\eta_\mu\left[x_j^*x_i\right]\right)_{i,j=1}^k\ge0\quad\text{in }
M_k(\mathcal B),
$$
$$
\|\eta_\mu\left[\mathcal Xb_1\mathcal Xb_2\cdots\mathcal Xb_k\mathcal X
\right]\|<M^{n+1}\|b_1\|\|b_2\|\cdots\|b_n\|\quad\text{for all }
b_1,\dots,b_n\in\mathcal B,
$$
and 
$$
h_\mu^{[n]}(b)=(\eta_\mu\otimes{\rm Id}_n)\left[(
\mathcal X\otimes 1_n-b)^{-1}\right]-(\mu\otimes{\rm Id}_n)
(\mathcal X\otimes 1_n),\quad b\in\mathbb H^+(M_n(\mathcal B)),n\in
\mathbb N.
$$
\end{thm}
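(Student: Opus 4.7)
The plan is to view this theorem as essentially a reformulation of \cite[Theorem 6.6]{popavinnikov}, applied to the particular noncommutative function $h_\mu=(h_\mu^{[n]})_{n\in\mathbb N}$, with the exponential moment bound deduced separately from $\mu\in\Sigma_0(\mathcal B)$.

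First I would verify that $(-h_\mu^{[n]})_n$ is a noncommutative Herglotz-type function on $(\mathbb H^+(M_n(\mathcal B)))_n$ with the correct asymptotics at infinity. Noncommutative-function properties (compatibility with direct sums and similarities) transfer to $h_\mu$ from $F_\mu$ and the identity. The fact that $F_\mu^{[n]}$ maps $\mathbb H^+(M_n(\mathcal B))$ into itself with $\Im F_\mu^{[n]}(b)\ge \Im b$ — a well-known consequence of the positivity of $\mu$, see \cite{V2} — immediately yields $\Im h_\mu^{[n]}(b)\ge 0$. A formal expansion of $(b-\mathcal X\otimes 1_n)^{-1}$ in powers of $b^{-1}$ gives $G_\mu^{[n]}(b)=b^{-1}+b^{-1}(\mu\otimes\mathrm{Id}_n)(\mathcal X\otimes 1_n)b^{-1}+O(\|b^{-1}\|^3)$, so inverting yields $h_\mu^{[n]}(b)+(\mu\otimes\mathrm{Id}_n)(\mathcal X\otimes 1_n)\to 0$ as $\|b^{-1}\|\to 0$ nontangentially. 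This is precisely the decay-at-infinity hypothesis needed for the Popa--Vinnikov representation.

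Second, I would invoke \cite[Theorem 6.6]{popavinnikov} directly: that theorem asserts that a noncommutative Herglotz function vanishing (after the correction by its limit at infinity) as $\|b^{-1}\|\to 0$ can be written as a noncommutative Cauchy transform of a completely positive operator-valued ``moment form''. Applied to $-h_\mu-(\mu\otimes\mathrm{Id}_\bullet)(\mathcal X\otimes 1_\bullet)$, this produces the required linear map $\eta_\mu\colon\mathcal B\langle\mathcal X\rangle\to\mathcal B$ with the stated positivity of the Gram matrices $(\eta_\mu[x_j^*x_i])_{i,j=1}^k$ and the integral-like formula in the theorem. Matching signs and the additive constant $(\mu\otimes\mathrm{Id}_n)(\mathcal X\otimes 1_n)$ is bookkeeping.

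Third, and this is where actual work is needed beyond simply citing \cite{popavinnikov}, I would establish the exponential moment bound $\|\eta_\mu[\mathcal X b_1\cdots b_{k-1}\mathcal X]\|<M^{k+1}\|b_1\|\cdots\|b_{k-1}\|$. The plan is to formally expand both sides of
\[
h_\mu^{[1]}(b)+\mu(\mathcal X)=\eta_\mu\!\left[(\mathcal X-b)^{-1}\right]
\]
in inverse powers of $b$ and extract a recursion expressing the $k$-th moment of $\eta_\mu$ as a polynomial in the moments of $\mu$ of orders $\le k+2$. From $\mu\in\Sigma_0(\mathcal B)$ one has an exponential bound on the moments of $\mu$ with some constant $M_0$; combined with a combinatorial count of the terms in the recursion, this yields an exponential bound on the moments of $\eta_\mu$ with a possibly enlarged constant $M$. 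Alternatively, and perhaps more cleanly, one can use the fact that $G_\mu^{[1]}$ is analytic and invertible in a norm-ball around infinity of some definite radius depending on $M_0$, so that $h_\mu^{[1]}$ is analytic there, and then translate this analyticity into moment bounds for $\eta_\mu$ via Cauchy-type estimates on the representing form.

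The main obstacle is precisely this last step: the representation part of the theorem is immediate from \cite{popavinnikov}, but controlling the growth of the moments of $\eta_\mu$ requires either a careful combinatorial analysis of the recursion between the moments of $\mu$ and those of $\eta_\mu$, or a quantitative analytic argument about the domain of analyticity of $G_\mu^{[1]\,\langle-1\rangle}$. The Gram-matrix positivity and the formula itself, by contrast, will fall out immediately from the application of \cite[Theorem 6.6]{popavinnikov}.
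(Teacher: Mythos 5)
Your plan—reduce to \cite[Theorem 6.6]{popavinnikov}, check the Herglotz-type properties and asymptotics of $-h_\mu$, and recover the Gram-matrix positivity and the representation formula from there—is exactly what the paper does: the paper offers no independent proof, it simply states the result as a rephrasing of Popa--Vinnikov. The one place where you diverge is your third step: you treat the exponential moment bound on $\eta_\mu$ as requiring separate work (a recursion between the moments of $\mu$ and those of $\eta_\mu$, or a Cauchy-estimate argument), whereas the paper treats the whole statement, norm bound included, as coming directly out of \cite[Theorem 6.6]{popavinnikov}. The paper's own gloss after the theorem statement (``the first statement of the theorem about the norm-bounded $\eta_\mu$ becomes equivalent to its complete positivity'' on the C${}^*$-completion of $\mathcal B\langle\mathcal X\rangle$) indicates that the exponential bound is already part of the Popa--Vinnikov output for $\mu\in\Sigma_0(\mathcal B)$, not something to be rederived. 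So your extra caution is not wrong, but it is likely redundant: if you invoke the same version of Popa--Vinnikov's Theorem 6.6 that the paper does, the moment bound falls out at the same time as the representation, and the only ``bookkeeping'' that remains is the sign convention and the subtraction of $(\mu\otimes\mathrm{Id}_n)(\mathcal X\otimes 1_n)$, which you already identify.
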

In \cite{popavinnikov} it is shown that under the assumption that
$\mu\in\Sigma_0(\mathcal B)$, $\mathcal B\langle\mathcal X\rangle$
has a natural C${}^*$-algebra completion, and then the
first statement of the theorem about the norm-bounded $\eta_\mu$ 
becomes equivalent to its complete positivity. This will be
important in our proofs. We finally write \eqref{omega} as
\begin{eqnarray}\label{eta}
\omega_\alpha^{[n]}(b)&=&b-\left[(\alpha-\text{Id}_\mathcal B)\otimes
\text{Id}_n\right](\mu\otimes\text{Id}_n)(\mathcal X\otimes1_n)\\
& & \mbox{}+\left[(\alpha-\text{Id}_\mathcal B)\otimes
\text{Id}_n\right](\eta_\mu\otimes\text{Id}_n)\left[
\left(\mathcal X\otimes1_n-\omega_\alpha^{[n]}(b)\right)^{-1}\right],
\nonumber
\end{eqnarray}
for $b\in\mathbb H^+(M_n(\mathcal B)),\ n\in\mathbb N.$
This equation determines $(\omega_\alpha^{[n]})_{n\in\mathbb N}$ and
thus, via the relation $G_{\mu}^{[n]}\circ\omega_\alpha^{[n]}=
G_{\mu^{\boxplus\alpha}}^{[n]}$,equivalent to \eqref{7}, determines 
$\mu^{\boxplus\alpha}$ in terms of $\mu$ and $\alpha$.

We conclude this section with a simple remark in light of 
\cite{ABFN,V*,RS2}: assume that in equation \eqref{eta} above, $\mu
(\mathcal X)=0$ and $\nu:=\eta_\mu$ is a conditional expectation. 
Denote $\beta:=\alpha-\text{Id}_\mathcal B$, and assume that $\beta$
is still completely positive. Then the above equation becomes
$$
\omega_\beta^{[n]}(b)=b+(\beta\otimes\text{Id}_n)(\nu\otimes
\text{Id}_n)\left[(\mathcal X\otimes 1_n-\omega_\beta^{[n]}(b))^{-1}
\right],\quad b\in\mathbb H^+(M_n(\mathcal B)),n\in\mathbb N.
$$
This is precisely the subordination equation generalizing
the results of \cite[Lemma 4]{Biane2} to the operator-valued context:
if $\gamma_\beta$ is the centered operator-valued semicircular
distribution of variance $\beta$, then
\begin{equation}\label{semicirc}
G_{\nu\boxplus\gamma_\beta}^{[n]}=G_{\nu}^{[n]}\circ\omega_\beta^{[n]},
\quad n\in\mathbb N.
\end{equation}
There are deeper reasons for the similarity between the above formula 
and \eqref{eta}, reasons evidentiated in the case $\mathcal B=\mathbb C$
in \cite{BN1,BN2} and which are explored in \cite{ABFN} for arbitrary
$\mathcal B$. 

For the purposes of our present study, we specify the object of 
interest: the solution in $\mathbb H^+(\mathcal B)$ of the functional 
equation
\begin{equation}\label{prob}
\omega(b)=b+{\bf a}+\eta\left[(\mathcal X-\omega(b))^{-1}\right],
\quad b\in\mathbb H^+(\mathcal B),
\end{equation}
and its noncommutative extension to the noncommutative operator
upper half-plane, where $\mathcal B$ is an arbitrary unital 
C${}^*$-algebra, ${\bf a}={\bf a}^*\in\mathcal B$, $\mathcal B\langle
\mathcal X\rangle$ has a C${}^*$-algebra completion 
in which $\mathcal X=\mathcal X^*$, and $\eta\colon
\mathcal B\langle\mathcal X\rangle\to\mathcal B$ is bounded, completely
positive. The function $\omega$ is necessarily the right inverse of
\begin{equation}\label{H}
H(w)=w-{\bf a}-\eta\left[(\mathcal X-w)^{-1}\right],\quad
\in\mathbb H^+(\mathcal B).
\end{equation}
These facts were proved in \cite{ABFN} and from now on we will take them for granted.


\section{$(\Re\omega(\cdot+iq),\Im\omega(\cdot+iq))$ is the graph
of a function}

Let $\gamma_t$ be the semicircular (Wigner) law of variance 
$t\in(0,+\infty)$ and let $\mu$ be an arbitrary Borel probability 
measure on $\mathbb R$. In \cite[Lemma 2]{Biane2} it is shown that 
the imaginary part of the Cauchy-Stieltjes transform of 
$\mu\boxplus\gamma_t$ is, up to a factor of $-\pi^{-1}$, 
equal to the function $v_t(u)$ given as
$$
v_t(u):=\inf\left\{v\ge0|\int_\mathbb R\frac{td\mu(x)}{(u-x)^2+v^2}
\leq1\right\},
$$
and moreover, that this infimum is reached (i.e. $t\int_\mathbb R
\frac{d\mu(x)}{(u-x)^2+v_t(u)^2}=1$) whenever $v_t(u)>0$. Our next
proposition establishes a slightly weaker (and necessarily so) 
operator-valued counterpart of this result. We denote by 
$\mathcal B^{sa}$ the set of all selfadjoint elements of the 
C${}^*$-algebra $\mathcal B$, by $\mathcal B^+$ its subset of 
nonnegative elements, and by $\mathcal B^{++}$ the (open) subset of 
$\mathcal B^{sa}$ of strictly positive (i.e. nonnegative and 
invertible in $\mathcal B$) elements.

\begin{prop}\label{prop:3.1}
Let $\mathcal B$ be a C${}^*$-algebra, $\eta$ be a completely
positive map on the C${}^*$-completion of $\mathcal B\langle\mathcal X
\rangle$ and ${\bf a}$ be a selfadjoint element of $\mathcal B$.
For any fixed $q\in\mathcal B$, $q>0$, there exists a function
$v_q\colon\mathcal B^{sa}\to\mathcal B^{++}$ such that
$$
v_q(u)=q+\eta\left[((\mathcal X-u)v_q(u)^{-1}(\mathcal X-u)+v_q(u)
)^{-1}\right],
$$
for all $u\in\mathcal B^{sa}$. Moreover, the correspondence 
$u\mapsto v_q(u)$ is uniformly bounded (with a bound depending on 
$q,\eta$) and the restriction to $\mathcal B^{sa}$ of an analytic map.
\end{prop}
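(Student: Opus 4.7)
My plan is to reduce the claim to properties of the subordination function $\omega\colon \mathbb H^+(\mathcal B)\to\mathbb H^+(\mathcal B)$ established in \cite[Theorem 8.4]{ABFN}, and parametrize its image on the horizontal slice $\{\Im b = q\}$ by the real part.

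First, I split \eqref{prob} into real and imaginary parts. For $b = x+iq$ with $x\in\mathcal B^{sa}$, write $\omega(b)=u+iv$ with $u\in\mathcal B^{sa}$. The factorization $\mathcal X-\omega = v^{1/2}(Y-i)v^{1/2}$, where $Y := v^{-1/2}(\mathcal X-u)v^{-1/2}$ is selfadjoint, gives (using $(Y-i)^{-1}(Y+i)^{-1} = (Y^2+1)^{-1}$) the algebraic identity
$$\Im\bigl[(\mathcal X-\omega)^{-1}\bigr] = (\mathcal X-\omega)^{-1}v(\mathcal X-\omega^*)^{-1} = \bigl((\mathcal X-u)v^{-1}(\mathcal X-u)+v\bigr)^{-1}.$$
Taking imaginary parts of \eqref{prob} (using that ${\bf a}$ is selfadjoint and that $\eta$ preserves selfadjointness by complete positivity) produces the desired equation
$$v = q+\eta\bigl[((\mathcal X-u)v^{-1}(\mathcal X-u)+v)^{-1}\bigr]$$
for every $(u,v) = (\Re\omega(x+iq),\Im\omega(x+iq))$. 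Since $\Im\eta[(\mathcal X-\omega)^{-1}] \geq 0$, we have $v\geq q$, so $v\in\mathcal B^{++}$.

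The central step, and the principal obstacle, is to produce such a $v$ for \emph{every} $u\in\mathcal B^{sa}$. I would show that the map $X_q\colon\mathcal B^{sa}\to\mathcal B^{sa}$, $X_q(x):=\Re\omega(x+iq)$, is a bijection, and then set $v_q(u):=\Im\omega(X_q^{-1}(u)+iq)$. Local invertibility comes from the implicit function theorem: differentiating \eqref{prob} yields $\omega'(b) = (\mathrm{Id}-L_b)^{-1}$ with $L_b(h) := \eta[(\mathcal X-\omega(b))^{-1}h(\mathcal X-\omega(b))^{-1}]$, and the invertibility of $\mathrm{Id}-L_b$ is forced by the analyticity of $\omega$ itself (from \cite{ABFN}). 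For global bijectivity, the estimate $\|X_q(x)-x-{\bf a}\| \leq \|\eta|_{\mathcal B}(q^{-1})\|$, which follows from $\|(\mathcal X-\omega(x+iq))^{-1}\|\leq\|v^{-1}\|\leq\|q^{-1}\|$ (via the same factorization as above), shows $X_q$ is a uniformly bounded perturbation of the identity and hence proper; combining properness with local invertibility via a Hadamard-style global inverse theorem on the real Banach space $\mathcal B^{sa}$ yields bijectivity. Alternatively, a homotopy argument along $\eta_t := t\eta$, $t\in[0,1]$, transports bijectivity from the trivial case $t=0$ (where $X_q$ is the obvious bijection $x\mapsto x+{\bf a}$) to $t=1$.

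The uniform bound is immediate from the equation: $(\mathcal X-u)v_q(u)^{-1}(\mathcal X-u)+v_q(u)\geq v_q(u)\geq q$ forces the inverse to be $\leq q^{-1}$, so $v_q(u)\leq q+\eta|_{\mathcal B}(q^{-1})$ and $\|v_q(u)\|\leq\|q\|+\|\eta|_{\mathcal B}(q^{-1})\|$ uniformly in $u$. Real-analyticity of $v_q = \Im\omega(\cdot+iq)\circ X_q^{-1}$ follows from the analyticity of $\omega$ combined with the analytic inverse function theorem applied to $X_q$; moreover, viewing
$$F(u,v) := v-q-\eta\bigl[((\mathcal X-u)v^{-1}(\mathcal X-u)+v)^{-1}\bigr]$$
as a holomorphic map on a complex neighborhood of $\mathcal B^{sa}\times\mathcal B^{++}$ whose $v$-derivative is invertible at each solution, the holomorphic implicit function theorem extends $v_q$ to a holomorphic map on a complex neighborhood of $\mathcal B^{sa}$ in $\mathcal B$, exhibiting $v_q$ as the restriction of an analytic map as claimed. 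The hardest step throughout is the global bijectivity of $X_q$ in infinite dimensions; local analyticity, the uniform bound, and the complex-analytic extension are all straightforward consequences once bijectivity is in hand.
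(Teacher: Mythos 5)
Your route — deriving $v_q$ from the already-constructed subordination function $\omega$ by inverting the real-part map $X_q(x)=\Re\omega(x+iq)$ — is genuinely different from the paper's, which constructs $v_q(u)$ directly, for \emph{each} $u\in\mathcal B^{sa}$, as the unique attracting fixed point (via Earle--Hamilton) of the self-map $v\mapsto g_q(u,v)=q+\eta\bigl[((\mathcal X-u)v^{-1}(\mathcal X-u)+v)^{-1}\bigr]$ of a right half-plane, and then obtains analyticity by lifting to the $n=2$ level of the noncommutative function and showing the linearization has spectrum in $\mathbb D$. Your approach has the drawback that $v_q(u)$ is only defined a priori on the \emph{range} of $X_q$, so everything hinges on global bijectivity of $X_q$; the paper sidesteps this entirely, and in fact proves bijectivity of $X_q$ later (Corollary 3.3) as a \emph{consequence} of Proposition 3.1, so your proposal reverses the logical order the author found natural.

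There are two concrete gaps. First, both your Hadamard-style and homotopy arguments for bijectivity of $X_q$ presuppose local invertibility of $X_q$, i.e.\ invertibility of the $\mathbb R$-linear operator $DX_q(x)\colon\mathcal B^{sa}\to\mathcal B^{sa}$, $h\mapsto\tfrac12\bigl(\omega'(x+iq)h+(\omega'(x+iq)h)^*\bigr)=\tfrac12\bigl(\omega'(x+iq)+\omega'(x-iq)\bigr)(h)$. You only argue for invertibility of the $\mathbb C$-linear operator $\omega'(b)=(\mathrm{Id}-L_b)^{-1}$, which is a different statement: the operator $L_b(h)=\eta\bigl[(\mathcal X-\omega(b))^{-1}h(\mathcal X-\omega(b))^{-1}\bigr]$ does not preserve $\mathcal B^{sa}$, so invertibility of $\omega'(b)$ (or even the spectral estimate $\sigma(\omega'(b))\subset\{\Re z>\tfrac12\}$ of Proposition 4.1) does not imply invertibility of the real-part map; the scalar argument $\Re\omega'>\tfrac12>0$ has no obvious operator-valued analogue. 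Second, in the final step you invoke the holomorphic implicit function theorem for $F(u,v)=v-g_q(u,v)$ ``whose $v$-derivative is invertible at each solution,'' but you never establish that $\mathrm{Id}-\partial_vg_q(u,v_q(u))$ is invertible; this is precisely the nontrivial content of the paper's analyticity argument (proving $\sigma(\partial_vg_q(u,v_q(u)))\subset\mathbb D$ through iteration of the level-$2$ map and the strict Kobayashi contraction), and it cannot be taken for granted.
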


\begin{proof}
It is useful to clarify first the relation between our 
proposition and Equation \eqref{prob}: taking imaginary part in 
this equation and recalling that (i) $\mathcal B\langle\mathcal X
\rangle$ has a C${}^*$-algebra structure, (ii) $\mathcal X=
\mathcal X^*$, and (iii) $\eta$ is positive, provides us
with
$$
\Im\omega(b)=\Im b+\eta\left[\left((\mathcal X-\Re\omega(b))
(\Im\omega(b))^{-1}(\mathcal X-\Re\omega(b))+\Im\omega(b)
\right)^{-1}\right].
$$
We fix $\Im b=q>0$: then our proposition states that the imaginary part
of $\omega(b)$ is a continuous function of the real part of $\omega(b)
$. Here, of course, $\Re\omega(b)$ is viewed as an independent variable.

Thus, let us fix $q>0$. Define
$$
g_q\colon\mathcal B^{sa}\times\mathcal B^{++}\to\mathcal B^{++},
\quad
g_q(u,v)=q+\eta\left[((\mathcal X-u)v^{-1}(\mathcal X-u)+v
)^{-1}\right].
$$
For any $\epsilon=\epsilon^*\in\mathcal B$ and $v>0$, the relation
$(v+i\epsilon)^{-1}=(v+\epsilon v^{-1}\epsilon)^{-1}-i
(v+\epsilon v^{-1}\epsilon)^{-1}\epsilon v^{-1}$ implies that
\begin{eqnarray*}
\lefteqn{(\mathcal X-u)(v+i\epsilon)^{-1}(\mathcal X-u)+
v+i\epsilon=}\\
& & \mbox{}(\mathcal X-u)(v+\epsilon v^{-1}\epsilon)^{-1}
(\mathcal X-u)+v+i\left(\epsilon-(\mathcal X-u)(v+\epsilon v^{-1}\epsilon)^{-1}\epsilon v^{-1}(\mathcal X-u)\right)
\end{eqnarray*}
which guarantees that the real part (in the C${}^*$-algebra completion
of $\mathcal B\langle\mathcal X\rangle$) of 
$(\mathcal X-u)(v+i\epsilon)^{-1}(\mathcal X-u)+
v+i\epsilon$ is greater than $v$. This makes it invertible for 
{\em any} $\epsilon=\epsilon^*\in\mathcal B$, allowing
the extension of $g_q$ to $\mathcal B^{sa}\times(-i)\mathbb H^+(
\mathcal B)$, and, moreover, guarantees that
$\Re g_q(u,v+i\epsilon)\ge q$ for any $(u,v+i\epsilon)\in
\mathcal B^{sa}\times(-i)\mathbb H^+(\mathcal B)$. We have thus
re-written $g_q(u,\cdot)$ as a self-map of the noncommutative
operator right half-plane. Observe that $v>q/2$ implies $\|v^{-1}\|<
2\|q^{-1}\|$. Since for any selfadjoint $V$ the relation $\|(v+iV)^{-1}\|
\leq\|v^{-1}\|$ holds, the above relation implies
$$
\left\|\eta\left[\left((\mathcal X-u)(v+i\epsilon)^{-1}(\mathcal X-u)+
v+i\epsilon\right)^{-1}\right]\right\|\leq\|\eta\|\|v^{-1}\|<2\|\eta\|\|q^{-1}\|.
$$
Precisely the same argument as the one
from the proof of \cite[Theorem 8.4]{ABFN} shows that $g_q(u,\cdot)$ 
maps a bounded subdomain $\mathcal D$ of $\{w\in(-i)\mathbb H^+
(\mathcal B)\colon\Re w\ge q/2\}$, depending on $u$ and $q$, strictly 
inside itself. The Earle-Hamilton theorem \cite[Section 11.1]{Dineen} 
guarantees that $g_q(u,\cdot)$ has precisely one attracting fixed point 
in $(-i)\mathbb H^+(\mathcal B)+q$ for any $u\in\mathcal B^{sa}$, point 
which we call $v_q(u)$. Moreover, the function $w\mapsto 
g_q(u,w)$ is shown in the same reference to be a {\em strict} 
contraction in the Kobayashi metric, with the contraction 
coefficient depending continuously on the distance from 
$g_q(u,\mathcal D)$ to the complement of $\mathcal D$.
Thus, the dependence of the fixed point on $u,q$ 
is necessarily sequentially continuous (recall that the
dependence $u\mapsto g_q(u,v)$ is smooth - in fact analytic).
Since on any (norm)-bounded subset of $\mathbb H^+(\mathcal B)$
which is at a strictly positive (norm)-distance from $\mathcal B
\setminus\mathbb H^+(\mathcal B)$,
the topology generated by the Kobayashi metric coincides with the 
norm topology, this makes the correspondence $u\mapsto v_q(u)$
norm-continuous. As $g_q(u,v)>0$ for all $u=u^*,v>0$, the 
uniqueness of the attracting fixed point of $g_q(u,\cdot)$ 
implies that it necessarily belongs to $\mathcal B^{++}$.

In order to conclude, we must show that the correspondence 
$u\mapsto v(u)$ extends analytically to a neighbourhood of
$\mathcal B^{sa}$. Direct computations show that 
$g_{q\otimes1_2}^{[2]}$ can be extended to the set $\mathcal D_2$
of elements
$$
\left\{\left(\left(\begin{array}{cc}
u_1 & c\\
0 & u_2
\end{array}\right),\left(\begin{array}{cc}
w_1 & d\\
0 & w_2
\end{array}\right)\right)\colon u_1,u_2\in\mathcal B^{sa},
w_1,w_2\in(-i)\mathbb H^+(\mathcal B),c,d\in\mathcal B \right\}:
$$
the expressions of the $(1,1)$ and $(2,2)$ entries are
$g_q(u_1,w_1)$ and $g_q(u_2,w_2)$, respectively, the $(2,1)$
entry is zero, and the $(1,2)$ entry is
\begin{eqnarray}
\lefteqn{
\eta\left[((\mathcal X-u_1)w_1^{-1}(\mathcal X-u_1)+w_1)^{-1}\frac{}{}
\left[(\mathcal X-u_1)w_1^{-1}c+cw_2^{-1}(\mathcal X-u_2)\right.\right.
}\nonumber\\
& &\mbox{}-d+\left.\left.
(\mathcal X-u_1)w_1^{-1}dw_2^{-1}(\mathcal X-u_2)\right]
((\mathcal X-u_2)w_2^{-1}(\mathcal X-u_2)+w_2)^{-1}\frac{}{}\right]
\label{deriv}.
\end{eqnarray}
This makes $g_{q\otimes1_2}^{[2]}$ into a self-map of
$\mathcal D_2$. For $u_1,u_2,c$ fixed, 
$\left(\begin{array}{cc}
w_1 & d\\
0 & w_2
\end{array}\right)\mapsto g_{q\otimes1_2}^{[2]}\left(
\left(\begin{array}{cc}
u_1 & c\\
0 & u_2
\end{array}\right),\left(\begin{array}{cc}
w_1 & d\\
0 & w_2
\end{array}\right)\right)$ maps the set
$$
\mathcal D_1=\left\{\left(\begin{array}{cc}
w_1 & d\\
0 & w_2
\end{array}\right)\colon w_1,w_2\in(-i)\mathbb H^+(\mathcal B),
d\in\mathcal B\right\}
$$
into itself. We have noted that for fixed $u_1,u_2,c$, the relations
$\Re w_1,\Re w_2>q/2$ imply uniform  norm boundedness 
for the factors $((\mathcal X-u_j)w_j^{-1}(\mathcal X-u_j)+w_j)^{-1},
j\in\{1,2\}$ in the C${}^*$-algebra completion of $\mathcal B
\langle\mathcal X\rangle$, as well as of
$(\mathcal X-u_j)w_j^{-1}$ etc. However, this bound might be quite 
large, making the estimates on \eqref{deriv} uniform, but useless in 
terms of mapping a bounded subset of $\mathcal D_1$ into itself, thus
precluding another direct application of the Earle-Hamilton Theorem.
We shall go around this inconvenient fact.

It is clear that if 
$\left(\begin{array}{cc}
w_1 & d\\
0 & w_2
\end{array}\right)\mapsto g_{q\otimes1_2}^{[2]}\left(
\left(\begin{array}{cc}
u_1 & c\\
0 & u_2
\end{array}\right),\left(\begin{array}{cc}
w_1 & d\\
0 & w_2
\end{array}\right)\right)$
has a fixed point in $\mathcal D_1$, then the $(1,1)$
and $(2,2)$ entries of this fixed point must be 
$v_q(u_1)$ and $v_q(u_2)$, respectively. This puts a very
strong restriction on the $(1,2)$ entry of the fixed point:
it must be of the form 
\begin{eqnarray*}
\lefteqn{
\eta\left[((\mathcal X-u_1)v_q(u_1)^{-1}(\mathcal X-u_1)+v_q(u_1))^{-1}\frac{}{}\right.}\\
& &\mbox{}\times
\left[(\mathcal X-u_1)v_q(u_1)^{-1}c+cv_q(u_2)^{-1}(\mathcal X-u_2)
\right.\\
& &\mbox{}-d+\left.
(\mathcal X-u_1)v_q(u_1)^{-1}dv_q(u_2)^{-1}(\mathcal X-u_2)\right]\\
& &\mbox{}\times\left.
((\mathcal X-u_2)v_q(u_2)^{-1}(\mathcal X-u_2)+v_q(u_2))^{-1}\frac{}{}\right],
\end{eqnarray*}
for some $d\in\mathcal B$. This fixed point, if existing, must
depend linearly on $c$. Thus, we are allowed to re-scale $c$ as small
(in norm) as we desire. However, we are still inconvenienced
by the (implicit) requirement that the norm of the remaining
part of the expression above (the terms not containing $c$)
is {\em strictly} less than $\|d\|$. In order to address this 
problem, we need to enlarge the domain of definition of  
$g_{q\otimes1_2}^{[2]}$. For $u_1,u_2,c$ fixed as above, 
with the possible proviso that $c$ might be re-scaled 
(see equation \eqref{estimate} below), and $\varepsilon>0$, 
we consider the set
$$
\tilde{\mathcal D}_1^\varepsilon=
\left\{\left(\begin{array}{cc}
w_1 & d\\
m & w_2
\end{array}\right)\in M_2(\mathcal B)\colon
\Re\left(\begin{array}{cc}
w_1 & d\\
m & w_2
\end{array}\right)>\varepsilon1\otimes1_2
\right\}.
$$
The defining inequality of $\tilde{\mathcal D}_1^\varepsilon$ 
requires $\Re w_j>\varepsilon1$ and 
$\frac{d^*+m}{2}(\Re w_1-\varepsilon1)^{-1}\frac{d+m^*}{2}<
(\Re w_2-\varepsilon1)$. 
In order to study $g_{q\otimes1_2}^{[2]}$, we consider the expression
$$
\left[\left(\begin{array}{cc}
\mathcal X-u_1 & -c\\
0 & \mathcal X-u_2
\end{array}\right)
\left(\begin{array}{cc}
w_1 & d\\
m & w_2
\end{array}\right)^{-1}
\left(\begin{array}{cc}
\mathcal X-u_1 & -c\\
0 & \mathcal X-u_2
\end{array}\right)
+
\left(\begin{array}{cc}
w_1 & d\\
m & w_2
\end{array}\right)
\right]^{-1}
$$ 
which appears under $\eta\otimes\text{Id}_2$ in the formula 
of $g_{q\otimes1_2}^{[2]}$. Under the assumption that 
the argument belongs to $\tilde{\mathcal D}_1^\varepsilon
$, we determine under what conditions the element under the inverse 
has positive real part, and hence the whole expression above has 
positive real part (recall that $(-i)\mathbb H^+(\mathcal B)$ is 
invariant under taking inverse). We write
\begin{eqnarray*}
\lefteqn{\left(\begin{array}{cc}
\mathcal X-u_1 & -c\\
0 & \mathcal X-u_2
\end{array}\right)
\left(\begin{array}{cc}
w_1 & d\\
m & w_2
\end{array}\right)^{-1}
\left(\begin{array}{cc}
\mathcal X-u_1 & -c\\
0 & \mathcal X-u_2
\end{array}\right)}\\
&=&\left(\begin{array}{cc}
\mathcal X-u_1 & 0\\
0 & \mathcal X-u_2
\end{array}\right)
\left(\begin{array}{cc}
w_1 & d\\
m & w_2
\end{array}\right)^{-1}
\left(\begin{array}{cc}
\mathcal X-u_1 & 0\\
0 & \mathcal X-u_2
\end{array}\right)\\
& &\mbox{}-\left(\begin{array}{cc}
0 & c\\
0 & 0
\end{array}\right)
\left(\begin{array}{cc}
w_1 & d\\
m & w_2
\end{array}\right)^{-1}
\left(\begin{array}{cc}
\mathcal X-u_1 & 0\\
0 & \mathcal X-u_2
\end{array}\right)\\
& &\mbox{}-\left(\begin{array}{cc}
\mathcal X-u_1 & 0\\
0 & \mathcal X-u_2
\end{array}\right)
\left(\begin{array}{cc}
w_1 & d\\
m & w_2
\end{array}\right)^{-1}
\left(\begin{array}{cc}
0 & c\\
0 & 0
\end{array}\right)\\
& &\mbox{}+\left(\begin{array}{cc}
0 & c\\
0 & 0
\end{array}\right)
\left(\begin{array}{cc}
w_1 & d\\
m & w_2
\end{array}\right)^{-1}
\left(\begin{array}{cc}
0 & c\\
0 & 0
\end{array}\right),
\end{eqnarray*}
for $\left(\begin{array}{cc} w_1 & d\\ m & w_2 \end{array}\right)\in
\tilde{\mathcal D}_1^\varepsilon$.
It is clear that the first term on the right hand side above 
has real part greater than or equal to zero.
Since the real part of $\left(\begin{array}{cc}
w_1 & d\\
m & w_2
\end{array}\right)$
is greater than $\varepsilon$ times the unit of $M_2(\mathcal B)$, it 
follows that the norm of its inverse is no greater than $\varepsilon^{-1}
$. Thus, for all $c\in\mathcal B$ with 
\begin{equation}\label{estimate}
\|c\|<\min\left\{\frac12,\frac{\varepsilon^2}{4+16\|\mathcal X\|+8(\|u_1\|+\|u_2\|)}\right\},
\end{equation}
the norm of the sum of the real parts of the seconf
and third terms is strictly less than $\varepsilon/2$.
We conclude that 
$$
\Re\left[\left(\begin{array}{cc}
\mathcal X-u_1 & -c\\
0 & \mathcal X-u_2
\end{array}\right)
\left(\begin{array}{cc}
w_1 & d\\
m & w_2
\end{array}\right)^{-1}
\left(\begin{array}{cc}
\mathcal X-u_1 & -c\\
0 & \mathcal X-u_2
\end{array}\right)
\right]>-\frac\varepsilon21\otimes1_2.
$$
This guarantees that the real part of 
$$
\left(\begin{array}{cc}
\mathcal X-u_1 & -c\\
0 & \mathcal X-u_2
\end{array}\right)
\left(\begin{array}{cc}
w_1 & d\\
m & w_2
\end{array}\right)^{-1}
\left(\begin{array}{cc}
\mathcal X-u_1 & -c\\
0 & \mathcal X-u_2
\end{array}\right)+\left(\begin{array}{cc}
w_1 & d\\
m & w_2
\end{array}\right)
$$
is strictly greater than $\frac\varepsilon21\otimes1_2$. If we choose
$\varepsilon\in(0,1)$ sufficiently small so that 
$q>2\varepsilon1$ in $\mathcal B\langle\mathcal X\rangle$, then 
$\left(\begin{array}{cc}
w_1 & d\\
m & w_2
\end{array}\right)\mapsto g_{q\otimes1_2}^{[2]}\left(
\left(\begin{array}{cc}
u_1 & c\\
0 & u_2
\end{array}\right),\left(\begin{array}{cc}
w_1 & d\\
m & w_2
\end{array}\right)\right)$ maps $\tilde{\mathcal D}_1^\varepsilon$
in a bounded subset of itself which is at strictly positive
distance from the complement of $\tilde{\mathcal D}_1^\varepsilon$,
as shown above. 
The Earle-Hamilton Theorem \cite[Section 11.1]{Dineen} applies 
to provide a unique attracting fixed point in
$\tilde{\mathcal D}_1^\varepsilon$ for this correspondence. 
As noted above, upper diagonal matrices are mapped to upper
diagonal matrices. Thus, any iteration of 
$g_{q\otimes1_2}^{[2]}\left(
\left(\begin{array}{cc}
u_1 & c\\
0 & u_2
\end{array}\right),\quad\cdot\quad\right)$ that starts at an
upper diagonal matrix cannot converge to a matrix
that is not upper diagonal. Thus, we obtain a 
$d=d(u_1,u_2,c)$ such that 
$\left(\begin{array}{cc}
v_q(u_1) & d\\
0 & v_q(u_2)
\end{array}\right)$ is the attracting fixed point of the correspondence 
given just above. As argued above,
the dependence of the fixed point on the initial data $(u_1,u_2,c)$
is norm-continuous. 
With the notation $d(u_1,u_2,c)=
\Delta v_q(u_1,u_2)(c)$, justified by \cite[Section 2]{ncfound},
we obtain 
\begin{eqnarray}
\lefteqn{
\eta\left[((\mathcal X-u_1)v_q(u_1)^{-1}(\mathcal X-u_1)+v_q(u_1))^{-1}\frac{}{}\right.}\nonumber\\
& &\mbox{}\times
\left[(\mathcal X-u_1)v_q(u_1)^{-1}c+cv_q(u_2)^{-1}(\mathcal X-u_2)
\right.\nonumber\\
& &\mbox{}-\Delta v_q(u_1,u_2)(c)+\left.
(\mathcal X-u_1)v_q(u_1)^{-1}\Delta v_q(u_1,u_2)(c)
v_q(u_2)^{-1}(\mathcal X-u_2)\right]\nonumber\\
& &\mbox{}\times\left.
((\mathcal X-u_2)v_q(u_2)^{-1}(\mathcal X-u_2)+v_q(u_2))^{-1}\frac{}{}\right]=\Delta v_q(u_1,u_2)(c),\label{linIm}
\end{eqnarray}
for all $c\in\mathcal B$ of sufficiently small norm (estimated in
\eqref{estimate}), and, by linearity, for all $c\in\mathcal B$. 
Moreover, this same norm estimate \eqref{estimate} is seen to
be uniform for $u_1,u_2$ uniformly bounded. We conclude that
the correspondence $(u_1,u_2,c)\mapsto\Delta v_q(u_1,u_2)(c)$ is not 
only continuous, but also locally uniformly bounded when the norm 
topologies are considered on $\mathcal B^{sa}\times\mathcal B^{sa}
\times\mathcal B$ and $\mathcal B$. As shown in the same 
\cite[Section 2]{ncfound}, $\Delta v_q(u,u)(c)=\partial_uv_q(c)$.
We conclude that the correspondence $u\mapsto v_q(u)$ is in fact
$C^1$ in the Fr\'echet sense on $\mathcal B$.

We use next the property of $\Delta v_q(u_1,u_2)(c)$
to be an attracting fixed point for the map on the left 
hand side of \eqref{linIm}.
More precisely, we write the left hand side of \eqref{linIm}
as the sum of two linear maps (one of them, \eqref{Delta1}, is applied in \eqref{linIm} 
to $c$, the other, \eqref{Delta2}, to $\Delta v_q(u_1,u_2)(c)$):
\begin{eqnarray}
\lefteqn{\Delta_1g_{q}(u_1,u_2;v_q(u_1),v_q(u_2))(c)
=}\label{Delta1}\\
& &
\eta\left[((\mathcal X-u_1)v_q(u_1)^{-1}(\mathcal X-u_1)+v_q(u_1))^{-1}
\right.\nonumber\\
& &\mbox{}\times
\left[(\mathcal X-u_1)v_q(u_1)^{-1}c+cv_q(u_2)^{-1}(\mathcal X-u_2)
\right]\nonumber\\
& &\mbox{}\times\left.
((\mathcal X-u_2)v_q(u_2)^{-1}(\mathcal X-u_2)+v_q(u_2))^{-1}\right],
\nonumber
\end{eqnarray}
and
\begin{eqnarray}
\lefteqn{\Delta_2g_{q}(u_1,u_2;v_q(u_1),v_q(u_2))(d)
=}\label{Delta2}\\
& &
\eta\left[((\mathcal X-u_1)v_q(u_1)^{-1}(\mathcal X-u_1)+v_q(u_1))^{-1}
\right.\nonumber\\
& &\mbox{}\left[(\mathcal X-u_1)v_q(u_1)^{-1}d
v_q(u_2)^{-1}(\mathcal X-u_2)-d\right]\nonumber\\
& &\mbox{}\times\left.
((\mathcal X-u_2)v_q(u_2)^{-1}(\mathcal X-u_2)+v_q(u_2))^{-1}\right].
\nonumber
\end{eqnarray}
The correspondence that we iterate is 
\begin{equation}\label{mappe}
\left(\begin{array}{cc}
v_q(u_1) & d\\
0 & v_q(u_2)
\end{array}\right)\mapsto g_{q\otimes1_2}^{[2]}\left(
\left(\begin{array}{cc}
u_1 & c\\
0 & u_2
\end{array}\right),\left(\begin{array}{cc}
v_q(u_1) & d\\
0 & v_q(u_2)
\end{array}\right)\right).
\end{equation}
The right-hand side of this correspondence is
$$
\left(\begin{array}{cc}
v_q(u_1) & \Delta_1g_{q}
(u_1,u_2;v_q(u_1),v_q(u_2))(c)+
\Delta_2g_{q}(u_1,u_2;v_q(u_1),v_q(u_2))(d)\\
0 & v_q(u_2)
\end{array}\right)
$$
(recall that $g_q(u_j,v_q(u_j))=v_q(u_j)$ by the first part of the proof of Proposition
\ref{prop:3.1}). In order to save space, we denote just here
$S(\cdot)=\Delta_1g_{q}(u_1,u_2;v_q(u_1),v_q(u_2))(\cdot)$,
$T(\cdot)=\Delta_2g_{q}(u_1,u_2;v_q(u_1),v_q(u_2))(\cdot)$.
This expression makes clear that the $n^{\rm th}$
iteration of $g^{[2]}$ provides to its $(1,2)$ entry
$T^n(d)+\sum_{j=0}^{n-1}T^j(S(c))$. We conclude from the
existence of the limit as $n\to\infty$ of this expression
for any $c,d$ in a ball of small enough diameter (see Eq.
\eqref{estimate}) that $\left\|T^n(d)+\sum_{j=0}^{n-1}T^j(S(c))-\Delta 
v_q(u_1,u_2)(c)\right\|\to0$ as $n\to\infty$. In particular,
$\|T^n(d)\|\to0$ as $n\to\infty$.

Now we use again the essential property of the map \eqref{mappe}
to be a {\em strict} contraction in the Kobayashi metric,
with the contraction coefficient uniformly bounded away from one
when $u_1,u_2,c$ vary very little in norm. This makes the
above convergence to zero uniform in $c$ and $d$ for
$c,d$ in small enough norm-balls around zero in $\mathcal B$.
In particular, $\|T^n\|\to0$ as $n\to\infty$, imposing
$\sigma(T)\subset\mathbb D$. The claimed result follows by a
direct application of the analytic implicit function theorem.
\end{proof}

\begin{remark}\label{rm:3.2}
{\rm\ 
\begin{enumerate}
\item It should be noted that in the proof of the above proposition,
the noncommutative structure of the functions involved has not come
up in the proof of the existence and the norm-continuity of 
$u\mapsto v(u)$. In particular, for the proof of the existence,
boundedness and norm-continuity of $u\mapsto v(u)$, 
the requirement of complete positivity of the
linear map $\eta$ can be relaxed to simple positivity. Moreover, 
the proof of the analyticity of this correspondence involves only the
2-positivity of $\eta$. However, the
hypothesis of {\em complete} positivity is necessary, and sufficient, 
in order for the conclusion of Proposition \ref{prop:3.1} to hold
at all levels $n\in\mathbb N$, and thus, in the light of the motivation
for our investigation, it makes sense to keep it.

\item
The existence of $\omega(r+iq)$ for
any $r=r^*\in\mathcal B$, proved in \cite[Theorem 8.4]{ABFN},
as an attracting fixed point of $f_{r+iq}(w)=r+iq+{\bf a}
+\eta\left[(\mathcal X-w)^{-1}\right]$ guarantees that there are
pairs of points $(\Re\omega(r+iq),\Im\omega(r+iq))\in\mathcal B^{sa}
\times\mathcal B^{++}$ such that $g_q(\Re\omega(r+iq),\Im
\omega(r+iq))=\Im\omega(r+iq)$. The uniqueness of the fixed point
of $g_q(u,\cdot)$ guarantees that $v_q(\Re\omega(r+iq))=
\Im\omega(r+iq)$ whenever $u$ is of the form $\Re\omega(r+iq)$;
in particular, the set $\{(\Re\omega(r+iq),\Im\omega(r+iq))\colon
r\in\mathcal B^{sa}\}$ is the graph of a function defined on $\mathcal 
B^{sa}$ with values in $\mathcal B^{++}$.

\item
It is remarkable in this context that $g$ is the
first level of a noncommutative map having the
properties described in Proposition \ref{prop:3.1} at each level $n$
(in our proof, only levels $n=1$ and $n=2$ appear). 
Indeed, the noncommutative extension of $g$ is written as 
$g^{[n]}_{q\otimes1_n}(u,v)=q\otimes 1_n+(\eta\otimes{\rm Id}_n)
\left[((\mathcal X\otimes1_n-u)v^{-1}(\mathcal X\otimes1_n-u)
+v)^{-1}\right]$, for $u\in M_n(\mathcal B)^{sa}$, $v\in(-i)
\mathbb H^+(M_n(\mathcal B))$, $n\in\mathbb N$ 
($M_n(\mathcal B)^{sa},n\in\mathbb N$, is a noncommutative
set, but not an admissible one). This necessarily implies that
the fixed point is itself noncommutative: if $u_n=u\otimes1_n$,
then $v_{q\otimes1_n}^{[n]}(u\otimes1_n)=v_q(u)\otimes1_n$
(see \cite{AKV} for this fact, and for more properties of noncommutative 
fixed points).

\item When $\mathcal B$ is finite dimensional (a C${}^*$-algebra of
complex matrices) - the most important case in the study of distributions
of polynomials and rational functions in free random variables - it is much easier to show 
that $u\mapsto v_q(u)$ is analytic in the sense of Proposition \ref{prop:3.1}.
Indeed, the fact that $v_q(u)$ is an attracting fixed point
for $v\mapsto g_q(u,v)$ which is in the interior of the domain of 
$g_q(u,\cdot)$ implies that all eigenvalues of 
$\partial_vg_q(u,v_q(u))$ are of absolute value strictly less than
one. In order to see this, we write 
$$
g^{[2]}_{q\otimes1_2}\left(\left(\begin{array}{cc}
u & 0\\
0 & u
\end{array}\right),\left(\begin{array}{cc}
v & c\\
0 & v
\end{array}\right)\right)=\left(\begin{array}{cc}
g_q(u,v) & \partial_vg_q(u,v)(c)\\
0 & g_q(u,v)
\end{array}\right),
$$
and observe that for $c\in\mathcal B$ satisfying $(v^{-1/2}cv^{-1/2})
(v^{-1/2}cv^{-1/2})^*<4$ (hence for any $c\in\mathcal B$ of 
sufficiently small norm), the real part of $\left(\begin{array}{cc}
v & c\\
0 & v
\end{array}\right)$ is strictly positive in $M_2(\mathcal B)$.
Iterating the map $\left(\begin{array}{cc}
v_q(u) & c\\
0 & v_q(u)
\end{array}\right)\mapsto
g^{[2]}_{q\otimes1_2}\left(\left(\begin{array}{cc}
u & 0\\
0 & u
\end{array}\right),\left(\begin{array}{cc}
v_q(u) & c\\
0 & v_q(u)
\end{array}\right)\right)$ provides convergence in the
norm of $M_2(\mathcal B)$ to the fixed point
$\left(\begin{array}{cc}
v_q(u) & 0\\
0 & v_q(u)
\end{array}\right)$ as $n\to\infty$. Direct computation yields the 
formula
$
\left(\begin{array}{cc}
v_q(u) & \partial_vg_q(u,v_q(u))^{\circ n}(c)\\
0 & v_q(u)
\end{array}\right)
$
for the $n^{\rm th}$ iterate
of the map $\left(\begin{array}{cc}
v_q(u) & c\\
0 & v_q(u)
\end{array}\right)\mapsto
g^{[2]}_{q\otimes1_2}\left(\left(\begin{array}{cc}
u & 0\\
0 & u
\end{array}\right),\left(\begin{array}{cc}
v_q(u) & c\\
0 & v_q(u)
\end{array}\right)\right),$ so
we must have $\lim_{n\to\infty}\partial_vg_q(u,v_q(u))^{\circ n}(c)
=0$ for all $c\in\mathcal B$. This requires the spectrum
$\sigma(\partial_vg_q(u,v_q(u)))\subset\mathbb D$
(we have denoted by $\mathbb D$ the open unit disc in the
complex plane).
An application of the implicit function theorem for analytic 
functions provides the desired result, with a formula for the
derivative of $v_q$ given by
\begin{equation}\label{DerivForvq}
\partial_uv_q(u)=\left[\text{Id}_\mathcal B-
\partial_vg_q(u,v_q(u))\right]^{-1}\circ\partial_ug_q(u,v_q(u)).
\end{equation}
Note that this argument also required 2-positivity for $\eta$.
As a side benefit, note that $\sigma\left(\left[\text{Id}_\mathcal B
-\partial_vg_q(u,v_q(u))\right]^{-1}\right)\subseteq
\frac12-i\mathbb C^+.$

\end{enumerate}
}
\end{remark}

A further, rather straightforward, corollary of the above proposition
and remarks is recorded here.

\begin{cor}\label{cor:3.3}
For any $q>0$ in $\mathcal B$, the map $\mathcal B^{sa}\ni u\mapsto\Re
\omega(u+iq)\in\mathcal B^{sa}$ is bijective. For any $u=u^*$, the map $\mathcal B^{++}
\ni q\mapsto v_q(u)\in\mathcal B^{++}$ is injective.
\end{cor}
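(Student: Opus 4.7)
My plan is to treat the two assertions separately, starting with the second since it is nearly immediate from Proposition \ref{prop:3.1}. If $v_{q_1}(u) = v_{q_2}(u) =: v$ for the same $u \in \mathcal{B}^{sa}$, then writing the fixed-point equation of Proposition \ref{prop:3.1} for both $q_1$ and $q_2$ and subtracting eliminates the common $\eta[\,\cdot\,]$ term, leaving $q_1 = q_2$.

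For the first assertion I would prove injectivity and surjectivity separately. Injectivity of $u \mapsto \Re\omega(u+iq)$ follows from Remark \ref{rm:3.2}(2): if $\Re\omega(u_1+iq) = \Re\omega(u_2+iq) =: r$, then the corresponding imaginary parts both coincide with $v_q(r)$, hence $\omega(u_1+iq) = \omega(u_2+iq)$, and applying $H$ (a left inverse of $\omega$) yields $u_1 = u_2$.

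Surjectivity is the substantive step. Given $r \in \mathcal{B}^{sa}$, I set $v := v_q(r) \in \mathcal{B}^{++}$ and form the candidate $w := r + iv \in \mathbb{H}^+(\mathcal{B})$. A short computation using the algebraic identity
\[
\Im\bigl[((\mathcal{X}-r)-iv)^{-1}\bigr] = \bigl((\mathcal{X}-r)v^{-1}(\mathcal{X}-r) + v\bigr)^{-1}
\]
together with the defining equation for $v_q(r)$ from Proposition \ref{prop:3.1} yields $\Im H(w) = v - (v - q) = q$. Setting $u := \Re H(w) \in \mathcal{B}^{sa}$, one has $H(w) = u + iq$; it remains to identify $w$ with $\omega(u+iq)$, for this gives $\Re \omega(u+iq) = r$.

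This last identification is the main obstacle, since $H$ is not manifestly injective on $\mathbb{H}^+(\mathcal{B})$. I would resolve it by observing that $H(w) = u+iq$ is equivalent to $w$ being a fixed point of the map $f_{u+iq}(z) := (u+iq) + \mathbf{a} + \eta[(\mathcal{X}-z)^{-1}]$, and invoking Theorem 8.4 of \cite{ABFN}, which characterizes $\omega(u+iq)$ as the unique attracting fixed point of $f_{u+iq}$ in $\mathbb{H}^+(\mathcal{B})$: the constant iteration sequence $(w, w, \dots)$ must then converge to $\omega(u+iq)$, forcing $w = \omega(u+iq)$.
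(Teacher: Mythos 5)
Your proof is correct, and the surjectivity argument is genuinely different from — and arguably cleaner than — the one in the paper. The paper establishes surjectivity by an analytic-continuation argument: it observes that $u\mapsto\Re\omega(u+iq)$ is a right inverse of an explicit map $\Phi_q(w)=w-{\bf a}-\eta\left[v_q(w)^{-1}(\mathcal X-w)\left((\mathcal X-w)v_q(w)^{-1}(\mathcal X-w)+v_q(w)\right)^{-1}\right]$ (this also yields injectivity), extends $\Phi_q$ analytically via Proposition~\ref{prop:3.1}, uses the classical inverse function theorem near $w_M=\Re\omega(M1+iq)$ for $M$ large to get the identity $w=\Re\omega(\Phi_q(w)+iq)$ locally, and then propagates it to all of $\mathcal B^{sa}$ using that $\mathcal B^{sa}$ is a set of uniqueness for analytic maps. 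You instead construct the preimage directly: given $r$, take $w=r+iv_q(r)$, verify via the algebraic identity $\Im\big[(\mathcal X-w)^{-1}\big]=\big((\mathcal X-r)v_q(r)^{-1}(\mathcal X-r)+v_q(r)\big)^{-1}$ and the fixed-point equation from Proposition~\ref{prop:3.1} that $\Im H(w)=q$, and then identify $w$ with $\omega(\Re H(w)+iq)$ by uniqueness of the fixed point. This avoids the analytic continuation step entirely. Your injectivity argument (equal real parts force equal $\omega$-values via Remark~\ref{rm:3.2}(2), then apply the left inverse $H$) is also more direct than the paper's.

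One small point of phrasing: the final step should be stated simply as an appeal to uniqueness of the fixed point, rather than to convergence of the constant iteration sequence. Since $H(w)=u+iq$ makes $w$ a fixed point of $f_{u+iq}$ in $\mathbb H^+(\mathcal B)$, and since any fixed point of $f_{u+iq}$ has $\Im w\geq q$ and $\|w\|\leq\|u+iq\|+\|{\bf a}\|+\|\eta\|\,\|q^{-1}\|$ — hence lies in the bounded subdomain on which the Earle-Hamilton theorem yields uniqueness — one concludes $w=\omega(u+iq)$ directly. Saying that the constant sequence ``must converge to $\omega(u+iq)$'' presupposes precisely what one wants to establish; the correct logic is that it trivially converges to $w$, while attractivity guarantees convergence to $\omega(u+iq)$, so these coincide. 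The idea is sound; the wording just inverts the quantifier.
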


\begin{proof}
If $w=\Re\omega(u+iq)\in\Re\omega(\mathcal B^{sa}+iq)$, then the relation
\begin{eqnarray*}
\lefteqn{u=\Re\omega(u+iq)-{\bf a}-}\\
& & \eta\left[v_q(\Re\omega(u+iq))^{-1}(\mathcal X-\Re\omega(u+iq))\times\frac{}{}\right.\\
& & \left.\left((\mathcal X-\Re\omega(u+iq))v_q(\Re\omega(u+iq))^{-1}(\mathcal X-\Re\omega(u+iq))
+v_q(\Re\omega(u+iq))\right)^{-1}\right]
\end{eqnarray*}
indicates that $u\mapsto\Re\omega(u+iq)$ is the right inverse of the map
$$
w\mapsto \Phi_q(w)=w-{\bf a}-\eta\left[v_q(w)^{-1}(\mathcal X-w)\left((\mathcal X-w)
v_q(w)^{-1}(\mathcal X-w)+v_q(w)\right)^{-1}\right].
$$
This shows that $u\mapsto\Re\omega(u+iq)$ is injective.
It follows from Proposition \ref{prop:3.1} that $\Phi_q$ has an analytic extension to a small 
enough norm-neighbourhood (depending on $q>0$) of $\mathcal B^{sa}$. 
For elements $w$ whose inverses $w^{-1}$ are of small norm, it 
follows easily from the formula of $\Phi_q$ and the fixed-point equation satisfied by $v_q$ 
(see Proposition \ref{prop:3.1}) that $\|\Phi'_q(w)-\textrm{Id}\|$ is small. Thus, by 
Equation \eqref{prob}, the classical inverse function theorem for Banach spaces applied 
to $\Phi_q$ in a point of the form $w_M=\Re\omega(M1+iq)$ for $M\in(0,+\infty)$
sufficiently large provides a local inverse for $\Phi_q$ on a neighbourhood (in $\mathcal B$) of 
$w_M$ and guarantees that $u\mapsto\Re\omega(u+iq)$ maps a neighbourhood
of $1M$ onto a neighburhood of $w_M=\Re\omega(M1+iq)$. Now
\begin{eqnarray*}
u=\Phi_q(\Re\omega(u+iq))&\implies&\Re\omega(u+iq)=\Re\omega(\Phi_q(\Re\omega(u+iq))+iq)\\
&\implies& w=\Re\omega(\Phi_q(w)+iq),
\end{eqnarray*}
for all $w$ in an open set containing $w_M$. 
On the other hand, formula \eqref{H} indicates that, as functions of $u$,
both $\Re\omega(u+iq)$ and $\Im\omega(u+iq)$ have analytic extensions
to small enough (depending on $q>0$) norm-neighbourhoods in $\mathcal B$
of $\mathcal B^{sa}$. 
Thus, the fact that $\mathcal B^{sa}$ is a set of uniqueness for analytic maps 
in $\mathcal B$ implies that the relation $w=\Re\omega(\Phi_q(w)+iq)$ 
holds for all $w=w^*\in\mathcal B^{sa}$, so that $u\mapsto\omega(u+iq)$
is also surjective.

The second statement of the corollary is trivial.
\end{proof}

\section{The derivative of $\omega$}

\subsection{Spectrum of the derivative} For the case of $\mathcal B=\mathbb C$, it is shown in 
\cite[Theorem 4.6]{BB} that the difference quotient of $\omega$
satisfies the inequality
$$
\left|\frac{\omega(z_1)-\omega(z_2)}{z_1-z_2}\right|\geq\frac12,
\quad z_1,z_2\in\mathbb C^+\cup\mathbb R.
$$
This is shown by proving that $\Re\omega'(\alpha)>1/2$ for all
$\alpha\in\mathbb C^+$. The operator-valued counterpart of this
statement has the following form:
\begin{prop}\label{prop:4.1}
Let $\mathcal B$ be a unital C${}^*$-algebra and let $H$ and 
$\omega$ be defined as in \eqref{H} and \eqref{prob}.
For any $b_1,b_2\in\mathbb H^+(\mathcal B)$, the spectrum of
$\Delta\omega(b_1,b_2)$ as a linear operator from $\mathcal B$
to itself is included in $\{z\in\mathbb C\colon\Re z>1/2\}$. In particular,
$\Delta\omega(b_1,b_2)\colon\mathcal B\to\mathcal B$ is invertible for any $b_1,b_2\in
\mathbb H^+(\mathcal B)$.
\end{prop}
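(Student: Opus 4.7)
The plan is to derive a resolvent-type fixed-point identity for the right difference-differential operator $\Delta\omega(b_1,b_2)\colon\mathcal B\to\mathcal B$ by applying the level-two noncommutative extension of \eqref{prob} to upper-triangular blocks, then to bound the spectrum of the underlying ``multiplier'' map via a Kobayashi-contraction argument patterned on the end of the proof of Proposition \ref{prop:3.1}, and finally to conclude via the spectral mapping theorem.

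Fix $b_1,b_2\in\mathbb H^+(\mathcal B)$ and, for $c\in\mathcal B$ of sufficiently small norm, consider the block matrix
$$
B=\begin{pmatrix}b_1&c\\0&b_2\end{pmatrix}\in\mathbb H^+(M_2(\mathcal B)).
$$
A standard noncommutative-function argument (apply the $GL_2(\mathbb C)$-invariance of $\omega^{[2]}$ to the scaling $\mathrm{diag}(\lambda,1)$ and let $\lambda\to 0$) forces $\omega^{[2]}$ to preserve the upper-triangular structure, and combined with $\omega^{[2]}(b_1\oplus b_2)=\omega(b_1)\oplus\omega(b_2)$ this gives
$$
\omega^{[2]}(B)=\begin{pmatrix}\omega(b_1)&\Delta\omega(b_1,b_2)(c)\\0&\omega(b_2)\end{pmatrix}.
$$
Inverting the upper-triangular matrix $\mathcal X\otimes 1_2-\omega^{[2]}(B)$, substituting into the level-two form of \eqref{prob}, and reading off the $(1,2)$ entry yields
$$
\Delta\omega(b_1,b_2)(c)=c+\eta\bigl[(\mathcal X-\omega(b_1))^{-1}\Delta\omega(b_1,b_2)(c)(\mathcal X-\omega(b_2))^{-1}\bigr].
$$
Introducing the bounded linear operator
$$
T\colon\mathcal B\to\mathcal B,\qquad T(d)=\eta\bigl[(\mathcal X-\omega(b_1))^{-1}\,d\,(\mathcal X-\omega(b_2))^{-1}\bigr],
$$
this identity reads $(\mathrm{Id}_{\mathcal B}-T)\circ\Delta\omega(b_1,b_2)=\mathrm{Id}_{\mathcal B}$; once the spectral radius $r(T)<1$ is established, the Neumann series will identify $\Delta\omega(b_1,b_2)$ with the two-sided inverse $(\mathrm{Id}_{\mathcal B}-T)^{-1}$. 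The spectral mapping theorem applied to the M\"obius transformation $\lambda\mapsto(1-\lambda)^{-1}$ then gives $\sigma(\Delta\omega(b_1,b_2))=\{(1-\lambda)^{-1}\colon\lambda\in\sigma(T)\}$, and the elementary identity $2(1-\Re\lambda)-|1-\lambda|^{2}=1-|\lambda|^{2}$ shows that $|\lambda|<1$ is equivalent to $\Re(1-\lambda)^{-1}>1/2$, so the required half-plane inclusion follows at once from $\sigma(T)\subset\mathbb D$.

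The core of the proof is therefore the estimate $r(T)<1$. I would reproduce here the reasoning at the end of the proof of Proposition \ref{prop:3.1}. Let $f_B^{[2]}$ denote the level-two iteration map $W\mapsto B+\mathbf a\otimes 1_2+(\eta\otimes\mathrm{Id}_2)[(\mathcal X\otimes 1_2-W)^{-1}]$. Starting from any upper-triangular $W_0$ whose diagonal is already the pair of fixed points $\omega(b_1),\omega(b_2)$, the diagonal is frozen throughout the iteration, while the $(1,2)$ entries satisfy $d_{n+1}=c+T(d_n)$, so that $d_n=T^n(d_0)+(\mathrm{Id}_{\mathcal B}+T+\cdots+T^{n-1})(c)$. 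By \cite[Theorem 8.4]{ABFN}, on a suitable bounded subdomain of $\mathbb H^+(M_2(\mathcal B))$ at strictly positive norm-distance from the boundary, constructed exactly as $\tilde{\mathcal D}_1^{\varepsilon}$ in the proof of Proposition \ref{prop:3.1}, the map $f_B^{[2]}$ is a strict Kobayashi contraction with coefficient $k<1$. Since on such a subdomain the Kobayashi metric is Lipschitz-equivalent to the norm, iterating from two initial data $(d_0,d_0')$ with common diagonal $\omega(b_1)\oplus\omega(b_2)$ gives $\|T^n(d_0-d_0')\|\le Ck^n\|d_0-d_0'\|$; by linearity of $T$ and the fact that $d_0-d_0'$ ranges over a full norm-ball in $\mathcal B$, this yields $\|T^n\|\le Ck^n$, whence $r(T)\le k<1$.

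The main obstacle is the construction and verification of the bounded, strictly interior, Kobayashi-contractive subdomain of $\mathbb H^+(M_2(\mathcal B))$ adapted to $(b_1,b_2)$ and to the off-diagonal perturbation $c$. This step is technical but is fully templated by the $\tilde{\mathcal D}_1^{\varepsilon}$ construction already carried out for Proposition \ref{prop:3.1}; the remaining pieces, extraction of the resolvent identity for $\Delta\omega(b_1,b_2)$ and the spectral mapping computation, are routine.
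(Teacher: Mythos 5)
Your proposal is correct and follows essentially the same route as the paper: evaluate $\omega^{[2]}$ on the upper-triangular block $\begin{pmatrix}b_1&c\\0&b_2\end{pmatrix}$, obtain from the $(1,2)$ entry that $(\mathrm{Id}_{\mathcal B}-T)\circ\Delta\omega(b_1,b_2)=\mathrm{Id}_{\mathcal B}$ where $T=\Delta f(\omega(b_1),\omega(b_2))$, show $\sigma(T)\subset\mathbb D$ by the same Kobayashi strict-contraction argument used for Proposition \ref{prop:3.1} (via the attracting fixed-point characterization of \cite[Theorem 8.4]{ABFN}), and finish with the spectral mapping theorem. Your version merely makes the operator $T$ and the M\"obius computation $\Re(1-\lambda)^{-1}>1/2\Leftrightarrow|\lambda|<1$ explicit, where the paper phrases the same facts through $\Delta H=\mathrm{Id}-\Delta f$ and analytic functional calculus.
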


\begin{proof}
The proof is very similar in spirit to the proof of 
\cite[Theorem 4.6]{BB}. Consider $b_1,b_2\in\mathbb H^+(\mathcal B)$ 
and $c\in\mathcal B$ of sufficiently small norm so that
$\left(\begin{array}{cc}
b_1 & c\\
0 & b_2
\end{array}\right)\in\mathbb H^+(M_2(\mathcal B))$. We 
evaluate $\omega$ on this matrix in order to obtain
\begin{eqnarray*}
\left(\begin{array}{cc}
b_1 & c\\
0 & b_2
\end{array}\right)&=&H^{[2]}\left(\omega^{[2]}\left(\begin{array}{cc}
b_1 & c\\
0 & b_2
\end{array}\right)\right)\\
&=&\left(\begin{array}{cc}
H(\omega(b_1)) & \Delta H(\omega(b_1),\omega(b_2))\Delta\omega(b_1,b_2)(c)\\
0 & H(\omega(b_2))
\end{array}\right).
\end{eqnarray*}
This indicates that $\Delta H(\omega(b_1),\omega(b_2))\circ\Delta\omega
(b_1,b_2)=\text{Id}_\mathcal B$. As shown in the proof of 
\cite[Theorem 8.4]{ABFN}, the point 
$\omega^{[2]}\left(\begin{array}{cc}
b_1 & c\\
0 & b_2
\end{array}\right)\in\mathbb H^+(M_2(\mathcal B))$ is the unique
attracting fixed point of the self-map 
$$
f^{[2]}\colon w\mapsto \left(\begin{array}{cc}
b_1 & c\\
0 & b_2
\end{array}\right)+
\left(\begin{array}{cc}
{\bf a} & 0\\
0 & {\bf a}
\end{array}\right)
+(\eta\otimes\text{Id}_{M_2(\mathbb C)})\left[\left(
\left(\begin{array}{cc}
{\mathcal X} & 0\\
0 & {\mathcal X}
\end{array}\right)-w\right)^{-1}\right]
$$
of $\mathbb H^+(M_2(\mathcal B))$.
The methods used in the proof of Proposition \ref{prop:3.1} apply to
show that this map is a strict contraction in the 
Kobayashi metric. We conclude that the spectrum of
$\Delta f(\omega(b_1),\omega(b_2))$ is included in 
the open unit disc $\mathbb D$. However, 
$\Delta H(\omega(b_1),\omega(b_2))-\text{Id}_\mathcal B=
\Delta f(\omega(b_1),\omega(b_2)),$ which implies by the
definition of the spectrum that 
$\sigma(\Delta H(\omega(b_1),\omega(b_2)))\subset\mathbb D+1$.
Analytic functional calculus rules (see, for example, \cite[Section II.1.5]{Bruce}) provide
$$
\sigma(\Delta\omega(b_1,b_2))=
\sigma\left(\Delta H(\omega(b_1),\omega(b_2))^{-1}\right)\subset
\{z\in\mathbb C\colon\Re z>1/2\}.
$$
\end{proof}
As before, the proof of the above proposition only requires
$\eta$ to be $2$-positive.

We note a significant element: if we consider a $b_0$
in the {\em boundary} of $\mathbb H^+(\mathcal B)$
and we try to apply the implicit function theorem 
to the function $f(b,w)-w=b+{\bf a}+\eta\left[(\mathcal X
-w)^{-1}\right]-w$ around a point $(b_0,w_0)$, where
$w_0$ is a point in $\mathbb H^+(\mathcal B)\cap C(\omega,b_0)$
(where $C(\omega,b_0)$ denotes the set of limit 
points of $\omega$ at $b_0$), it turns out that
this is possible whenever $0\not\in
\sigma(\Delta H(w_0,w_0))$.
If $\mathcal B$ is finite dimensional, the set of
points $w_0$ with positive imaginary part that
satisfy such a condition is an analytic set. It turns
out that this analytic set has several properties of interest,
which become quite evident when one considers rather
the map $(w_1,w_2)\mapsto\Delta H(w_1,w_2)$, and which will
be investigated later.

\subsection{The  Julia-Carath\'eodory derivative}
We discuss  the Julia-Carath\'eodory derivatives at the 
distinguished boundary for the functions $\omega$ and $h$.
 We shall use  results and methods from \cite{B},
especially Propositions 3.1, 3.2 and Theorem 2.2. 
Assume that the there exists a sequence $\{z_n\}_{n\in\mathbb N}
\subset\mathbb C^+$ converging nontangentially to zero so that
the norm-limit $\lim_{n\to\infty}
\omega(\alpha+z_nv)=\omega(\alpha)$ exists
and is selfadjoint. This implies that 
$$
\lim_{n\to\infty}
\omega(\alpha+z_nv)=\alpha+{\bf a}+\lim_{n\to\infty}
\left(z_nv+\eta\left[(\mathcal X-\omega(\alpha+z_nv))^{-1}\right]\right),
$$
In particular, 
$$
\lim_{n\to\infty}
\eta\left[(\mathcal X-\omega(\alpha+z_nv))^{-1}\right]=
\omega(\alpha)-\alpha-{\bf a}
$$
in norm. If these statements are straightforward, providing more details 
on the properties of $\omega'(\alpha)$ requires a finer analysis.

Let us recall that for any $b\in\mathbb H^+(\mathcal B)$, $\omega(b)$ is the attracting fixed 
point of the map $f_b(w)=b+\eta\left[(\mathcal X-w)^{-1}\right]$, $w\in\mathbb H^+(\mathcal B)$.
Recall from \cite[Proposition 3.1]{B} that 
$$
\left\|(\Im f_b(w_1))^{-\frac12}(f_b(w_1)-f_b(w_2))(\Im f_b(w_2))^{-\frac12}\right\|
\leq\left\|(\Im w_1)^{-\frac12}(w_1-w_2)(\Im w_2)^{-\frac12}\right\|,
$$
for all $w_1,w_2,b\in\mathbb H^+(\mathcal B)$. In particular, if $w_1=\omega(b)=f_b(\omega(b))$,
then 
\begin{equation}\label{contraction}
\left\|(\Im\omega(b))^{-\frac12}(\omega(b)-f_b(w_2))(\Im f_b(w_2))^{-\frac12}\right\|
\leq\left\|(\Im\omega(b))^{-\frac12}(\omega(b)-w_2)(\Im w_2)^{-\frac12}\right\|,
\end{equation}
for all $w_2,b\in\mathbb H^+(\mathcal B)$. As in \cite[Proposition 3.2]{B},
for $r\in(0,+\infty)$ and $c\in\mathbb H^+(\mathcal B)$, define
$$
B^+_n(c,r)=\left\{a\in\mathbb H^+(M_n(\mathcal B))\colon\left\|(\Im a)^{-\frac12}(a-c\otimes 1_n)(\Im c\otimes 1_n)^{-\frac12}
\right\|\leq r\right\},
$$
and denote
$$
\mathring{B}^+_n(c,r)=
\left\{a\in\mathbb H^+(M_n(\mathcal B))\colon\left\|(\Im a)^{-\frac12}(a-c\otimes 1_n)(\Im c\otimes 1_n)^{-\frac12}\right\|<
r\right\}.
$$
It follows from \eqref{contraction} that $f_b^{[n]}(B^+_n(\omega(b),r))\subseteq
B^+_n(\omega(b),r)$ and $f_b^{[n]}(\mathring{B}^+_n(\omega(b),r))\subseteq
\mathring{B}^+_n(\omega(b),r)$ for any $r>0$. Observe that, while by its
definition $f_b^{[n]}(\mathring{B}^+_n(\omega(b),r))\subset\mathbb H^+
(M_n(\mathcal B))+b\otimes 1_n$, the inclusions above do not imply that
$\mathring{B}^+_n(\omega(b),r))-b\otimes 1_n\subset\mathbb H^+
(M_n(\mathcal B)).$
As noted also in \cite{B}, the defining inequality of $B^+_n(c,r)$ can be
re-writen as 
$$
(a-c\otimes 1_n)^*(\Im a)^{-1}(a-c\otimes 1_n)\leq r^2\Im c\otimes 1_n
$$
(and with $<$ for $\mathring{B}^+_n(c,r)$). Thus, under the additional assumption of the existence of 
a sequence $\{z_n\}_{n\in\mathbb N}\subset\mathbb C^+$ converging nontangentially to zero such that
$\ell:=\lim_{n\to\infty}\frac{\Im\omega(\alpha+z_nv)}{\|\Im\omega(\alpha+z_nv)\|}$
exists in the weak operator topology and is a strictly positive operator, we may take the
limit of $\mathring{B}^+_1(\omega(\alpha+z_nv),\|\Im\omega(\alpha+z_nv)\|^{-1/2})$
as $n\to\infty$ in the sense that 
\begin{eqnarray*}
H_1(\omega(\alpha),\ell)&\supseteq&
\bigcap_{n\in\mathbb N}\bigcup_{k\ge n}
{B}^+_1(\omega(\alpha+z_kv),\|\Im\omega(\alpha+z_kv)\|^{-1/2})\\
&\supseteq&\bigcap_{n\in\mathbb N}\bigcup_{k\ge n}
\mathring{B}^+_1(\omega(\alpha+z_kv),\|\Im\omega(\alpha+z_kv)\|^{-1/2})\\
&\supseteq&\bigcup_{n\in\mathbb N}\bigcap_{k\ge n}
\mathring{B}^+_1(\omega(\alpha+z_kv),\|\Im\omega(\alpha+z_kv)\|^{-1/2}),
\end{eqnarray*}
where 
\begin{eqnarray}
\lefteqn{ H_n(\omega^{[n]}(\alpha\otimes1_n),\ell\otimes 1_n)=}\nonumber\\
 & = & \left\{w\in\mathbb H^+(M_n(\mathcal B))\colon
(w-\omega^{[n]}(\alpha\otimes1_n))^*(\Im w)^{-1}(w-\omega^{[n]}(\alpha\otimes1_n))\leq\ell\otimes1_n\right\}\nonumber\\
\end{eqnarray}
plays the role of a noncommutative horodisc. (Note that under our current hypothesis 
$\ell$ might not belong to $\mathcal B$, but only to its enveloping W${}^*$-algebra.)
Indeed, the second and third inclusion are obvious.
We prove the first by contradiction. Assume that there is a point $x\in
\bigcap_{n\in\mathbb N}\bigcup_{k\ge n}
{B}^+_1(\omega(\alpha+z_kv),\|\Im\omega(\alpha+z_kv)\|^{-1/2})$ such that $x\not\in
H_1(\omega(\alpha),\ell)$, i.e. 
$(\Re x-\omega(\alpha))(\Im x)^{-1}(\Re x-\omega(\alpha))+\Im x\not\leq\ell$. This in
turn means that there is a norm-one vector $\xi$ in the Hilbert space on which 
the W${}^*$-envelope of $\mathcal B$ acts as a von Neumann algebra such that
$$
\left\langle[(\Re x-\omega(\alpha))(\Im x)^{-1}(\Re x-\omega(\alpha))+\Im x]\xi,\xi\right\rangle>
\langle\ell\xi,\xi\rangle.
$$
By definition, we know that for any $n\in\mathbb N$ there exists some $k\ge n$ such that
$x\in {B}^+_1(\omega(\alpha+z_kv),\|\Im\omega(\alpha+z_kv)\|^{-1/2})$, i.e. there exists
a subsequence of $\{z_n\}_n$ (call it for simplicity $\{z_{n_k}\}_k$) such that 
$$
(x-\omega(\alpha+z_{n_k}v))^*(\Im x)^{-1}
(x-\omega(\alpha+z_{n_k}v))\leq\frac{\Im\omega(\alpha+z_{n_k}v)\|}{\|\Im\omega(\alpha+z_{n_k}v)\|}.
$$
Apply this inequality to $\xi$ to obtain 
$$
\left\langle[(x-\omega(\alpha+z_{n_k}v))^*(\Im x)^{-1}
(x-\omega(\alpha+z_{n_k}v))]\xi,\xi\right\rangle\leq\left\langle
\frac{\Im\omega(\alpha+z_{n_k}v)\|}{\|\Im\omega(\alpha+z_{n_k}v)\|}\xi,\xi\right\rangle.
$$
The assumption that $\frac{\Im\omega(\alpha+z_nv)}{\|\Im\omega(\alpha+z_nv)\|}$ converges to $\ell$
in the weak operator topology as $n\to\infty$ allows us to conclude that the right-hand term of the above
converges to $\langle\ell\xi,\xi\rangle$ as $k\to\infty$.
Recall that, according to our hypothesis, $\lim_{k\to\infty}\|\Im\omega(\alpha+z_{n_k}v)\|=0$
and $\lim_{k\to\infty}\|\Re\omega(\alpha+z_{n_k}v)-\omega(\alpha)\|=0$. For the fixed $x$,
we then have
$$
\lim_{k\to\infty}\|(x-\omega(\alpha+z_{n_k}v))^*(\Im x)^{-1}
(x-\omega(\alpha+z_{n_k}v))
-(x^*-\omega(\alpha))(\Im x)^{-1}(x-\omega(\alpha))\|=0,
$$
so that the left-hand side of the above inequality converges to 
$$
\left\langle[(\Re x-\omega(\alpha))(\Im x)^{-1}(\Re x-\omega(\alpha))+\Im x]\xi,\xi\right\rangle,
$$
providing an immediate contradiction.
We now have all the necessary tools for proving the first main result of this subsection.
\begin{thm}\label{FullHorodiscs}
Assume that $\mathcal B$ is a von Neumann algebra, ${\bf a}={\bf a}^*\in\mathcal B$,
$\mathcal X=\mathcal X^*$ is a selfadjoint random variable in a von Neumann algebra which
contains $\mathcal B$ as a von Neumann subalgebra, and $\eta$ is a completely positive
map on the {\rm C}${}^*$-algebra completion of $\mathcal B\langle\mathcal X\rangle$. 
Assume also that $\omega$ is the noncommutative function provided by \eqref{prob},
$\alpha=\alpha^*\in\mathcal B$ and $\mathcal B\ni v>0$ are such that there exists a
sequence $\{y_n\}_{n\in\mathbb N}\subset\mathbb R^+$ converging to zero and 
$$
\omega(\alpha):=\lim_{n\to\infty}\omega(\alpha+iy_nv)
$$
exists in the norm topology and is a selfadjoint element of $\mathcal B$. 
If 
$$
\ell=\ell(v):=\lim_{n\to\infty}\frac{\Im\omega(\alpha+iy_nv)}{\|\Im\omega(\alpha+iy_nv)\|}
$$
exists in the norm topology and is a strictly positive operator, then
for any state $\varphi$ on $\mathcal B$ and any $u>0$,
$$
\liminf_{z\to0}\frac{\varphi\left(\Im\eta\left[(\mathcal X-\omega(\alpha)-zu)^{-1}\right]\right)}{\Im z}
<+\infty.
$$
\end{thm}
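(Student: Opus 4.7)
The plan is to combine the fixed-point equation for $\omega$ with the horodisc/limit hypothesis to produce a sequence $z_k\to 0$ in $\mathbb{C}^+$ along which the ratio in the theorem stays bounded. I first handle the case $u=\ell$ and then extend to a general $u>0$.

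I take imaginary parts in the fixed-point equation \eqref{prob} at $b_n=\alpha+iy_nv$. Writing $w_n:=\omega(\alpha+iy_nv)$ and using the identity $\Im M^{-1}=M^{-*}(-\Im M)M^{-1}$ for $M=\mathcal{X}-w_n$, I obtain
\begin{equation*}
\Im w_n \;=\; y_n v + T_n(\Im w_n),\qquad T_n(b)\;:=\;\eta\!\left[(\mathcal{X}-w_n)^{-*}\,b\,(\mathcal{X}-w_n)^{-1}\right].
\end{equation*}
Since $\eta$ is completely positive, $T_n$ is a positive linear map on $\mathcal{B}$, and $y_n v\ge 0$ gives $T_n(\Im w_n)\le\Im w_n$. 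Applying the state $\varphi$, dividing by $s_n:=\|\Im w_n\|$, and invoking the norm convergence $\Im w_n/s_n\to\ell$ yields
\begin{equation*}
(y_n/s_n)\varphi(v) \;=\; \varphi(\Im w_n/s_n) - \varphi\!\left(T_n(\Im w_n)/s_n\right) \;\le\; \varphi(\ell)+o(1),
\end{equation*}
so $(y_n/s_n)$ is bounded in $[0,\varphi(\ell)/\varphi(v)]$. Passing to a subsequence I assume $y_n/s_n\to\lambda$.

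The fixed-point identity then rearranges to
\begin{equation*}
\frac{\varphi(\Im\eta[(\mathcal{X}-w_n)^{-1}])}{s_n} \;=\; \varphi(\Im w_n/s_n) - (y_n/s_n)\varphi(v)\;\longrightarrow\;\varphi(\ell)-\lambda\varphi(v)<\infty.
\end{equation*}
Strict positivity of $\ell$ implies $\Im w_n\ge c\,s_n$ (as operators) for some $c>0$ and $n$ large, hence $\|(\mathcal{X}-w_n)^{-1}\|=O(s_n^{-1})$. Combining this uniform bound with the norm convergences $\Re w_n\to\omega(\alpha)$ and $\Im w_n/s_n\to\ell$, and applying the resolvent identity to replace $w_n$ by $\omega(\alpha)+is_n\ell$ inside $\eta[\,\cdot\,]$, I conclude that the liminf in the theorem is finite in the special case $u=\ell$ along the sequence $z_n=is_n$.

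To pass from direction $\ell$ to an arbitrary $u>0$, I use strict positivity of $\ell$ to obtain constants $0<c'\le C'$ with $c'\ell\le u\le C'\ell$. The scalar holomorphic function $g_u(z):=\varphi(\eta[(\mathcal{X}-\omega(\alpha)-zu)^{-1}])$ is Nevanlinna-Pick on $\mathbb{C}^+$, so by the classical Julia-Carath\'eodory theorem, $\liminf_{z\to 0}\Im g_u(z)/\Im z$ coincides with the radial angular derivative $\lim_{y\to 0^+}\Im g_u(iy)/y\in[0,+\infty]$. To transfer the bound from $g_\ell$ to $g_u$ I plan to exploit the operator monotonicity of the Poisson-type expression $b\mapsto\eta[M^{-*}bM^{-1}]$ in the middle argument, combined with a resolvent-identity comparison between $\mathcal{X}-\omega(\alpha)-iyu$ and $\mathcal{X}-\omega(\alpha)-iy\ell$. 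The main obstacle is precisely this last comparison: both resolvents degenerate as $y\to 0^+$, and the naive resolvent identity produces error terms of order $O(y^{-1})$ that are not manifestly controlled by the bound already obtained for $g_\ell$. I expect to resolve it by working inside the Nevanlinna representation of $g_u$ and using the finiteness of the angular derivative of $g_\ell$, combined with the horodisc inclusion developed above the theorem, to show that the representing measure of $g_u$ has finite second-moment behaviour at the origin.
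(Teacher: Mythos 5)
There is a genuine gap, and it is already present in the case $u=\ell$ that you believe you have settled, not only in the extension to general $u$ that you explicitly flag as open. Your computation shows that $\varphi\big(\Im h(w_n)\big)/s_n$ is bounded, where $w_n=\omega(\alpha+iy_nv)$, $s_n=\|\Im w_n\|$, and $h(w)=\eta\big[(\mathcal X-w)^{-1}\big]$. But the theorem asks about the behaviour of $\Im h$ at points of the form $\omega(\alpha)+zu$, i.e.\ along a complex line through the boundary point $\omega(\alpha)$ with fixed direction $u$. The points $w_n$ do not lie on any such line: $w_n=\Re w_n+i\Im w_n$ with $\Re w_n\to\omega(\alpha)$ at an a priori uncontrolled rate. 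Your proposed resolvent-identity transfer from $w_n$ to $\omega(\alpha)+is_n\ell$ requires controlling $\|(\mathcal X-\omega(\alpha)-is_n\ell)^{-1}\|\cdot\|w_n-\omega(\alpha)-is_n\ell\|$, and since the first factor is of order $s_n^{-1}$ while the second contains $\|\Re w_n-\omega(\alpha)\|$, this is small only if $\|\Re w_n-\omega(\alpha)\|=o(s_n)$. Nothing in the hypotheses provides that rate, and indeed establishing a comparable rate of convergence of $\Re\omega$ is precisely the kind of Julia--Carath\'eodory regularity the theorem is meant to produce, so the argument is circular. Thus the step ``I conclude that the liminf$\ldots$ is finite in the special case $u=\ell$'' does not follow from what precedes it.

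The paper's argument does not attempt any pointwise resolvent comparison. Instead it observes that $f_{b}$ preserves the operator horodiscs $B^+_1(\omega(b),r)$ (hence $h$ maps them into translates), passes to the limit $n\to\infty$ of the horodiscs $B^+_1\big(\omega(\alpha+iy_nv),\|\Im\omega(\alpha+iy_nv)\|^{-1/2}\big)$ to obtain the limiting horodisc $H_1(\omega(\alpha),\ell)$, and proves the two set inclusions needed to sandwich $\mathring H_1(\omega(\alpha),\ell)$ between limsups and liminfs of the $B^+_1$'s (it is exactly here that the norm-convergence of $\Im\omega(\alpha+iy_nv)/\|\Im\omega(\alpha+iy_nv)\|$ to $\ell$ is used). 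The upshot is the purely geometric inclusion $h\big(\mathring H_1(\omega(\alpha),\ell/\kappa)\big)\subseteq\overline{H_1(\omega(\alpha)-\alpha,\ell/\kappa)}$ for every $\kappa>0$, which gives a direct \emph{norm} bound $\|\Im h(\omega(\alpha)+iyu)\|\le y/\epsilon_0$ along the ray $z=iy$, with $\epsilon_0$ determined by $u$ and $\ell$; applying $\varphi$ then finishes. This horodisc mechanism is what replaces your resolvent comparison, and it automatically handles every $u>0$ at once (only $\epsilon_0$ changes), so the ``general $u$'' issue you describe never arises. Your opening computation (imaginary parts of the fixed-point equation, boundedness of $y_n/s_n$) is correct and is in the same spirit as standard Julia--Carath\'eodory preliminaries, but by itself it only controls $\Im h$ along the sequence $\{w_n\}$, not along the rays through $\omega(\alpha)$ that the theorem is about.
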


The conclusion of the above theorem does require that the convergence to $\ell$
takes place in the norm topology, a considerably stronger requirement than before,
but not surprising in the light of the results of, for example, \cite{Fan}.

\begin{proof} By passing to a subsequence if necessary, we assume without loss
of generality that $\{y_n\}_{n\in\mathbb N}$ is a strictly decreasing sequence in 
$(0,1)$. Since $f_b^{[n]}(B^+_n(\omega(b),r))\subseteq B^+_n(\omega(b),r)$ 
for any $b\in\mathbb H^+(M_n(\mathcal B))$ and $r>0$, it follows that
$\eta\left[(\mathcal X-\cdot)^{-1}\right]=f_b^{[n]}(\cdot)-b$ maps 
$B^+_n(\omega(b),r)$ into $B^+_n(\omega(b),r)-b$, the shift of 
$B^+_n(\omega(b),r)$ by $-b$. In particular, as $w\mapsto
\eta\left[(\mathcal X-w)^{-1}\right]$ sends the upper half-plane into its closure,
$B^+_n(\omega(\alpha+iy_nv),\|\Im \omega(\alpha+iy_nv)\|^{-1/2})$
is sent inside $(B^+_n(\omega(\alpha+iy_nv),\|\Im \omega(\alpha+iy_nv)\|^{-1/2})
-\alpha-iy_nv)\cap\overline{\mathbb H^+(M_n(\mathcal B))}$. 
According to \cite[Proposition 3.2]{B}, the sets 
$B^+_n(\omega(\alpha+iy_nv),\|\Im \omega(\alpha+iy_nv)\|^{-1/2})$ are bounded
by 
\begin{eqnarray*}
\lefteqn{\|\Re\omega(\alpha+iy_nv)\|+
\frac{1+\|\Im \omega(\alpha+iy_nv)\|+\sqrt{1+4\|\Im \omega(\alpha+iy_nv)\|}}{2}}\\
& & \mbox{}+\sqrt{\frac{1+\|\Im \omega(\alpha+iy_nv)\|+\sqrt{1+4\|\Im \omega(\alpha+iy_nv)\|}}{2}},
\end{eqnarray*}
quantities that tend to $\|\omega(\alpha)\|+2$ as $n\to\infty$. 
This makes the sets $H_n(\omega^{[n]}(\alpha\otimes1_n),\ell\otimes 1_n)$, $n\in\mathbb N$,
bounded in norm, uniformly in $n$ (see also \cite{AKV}).

It is for the next result that we need the strengthening of the 
convergence to $\ell$ to convergence in norm compared to 
the discussion before our present theorem. Under the hypothesis
of weak operator convergence we were able to show that $H_1(
\omega(\alpha),\ell)$ includes certain limsup and liminf of sets. 
Now we need to show in addition that 
$$
\bigcup_{n\in\mathbb N}\bigcap_{k\ge n}
\mathring{B}^+_1(\omega(\alpha+iy_kv),\|\Im\omega(\alpha+iy_kv)\|^{-1/2})
\supseteq\mathring{H}_1(\omega(\alpha),\ell),
$$
where $\mathring{H}_1(\omega(\alpha),\ell)=
\{w\in\mathbb H^+(\mathcal B)\colon(w-\omega(\alpha))^*(\Im w)^{-1}(w-\omega(\alpha))<
\ell\}.$
This inclusion is equivalent to showing that for any fixed $w\in\mathring{H}_1(\omega(\alpha),\ell)$,
there exists an $n_w\in\mathbb N$ such that 
$$
(w-\omega(\alpha+iy_{n}v))^*(\Im w)^{-1}
(w-\omega(\alpha+iy_{n}v))<\frac{\Im\omega(\alpha+iy_{n}v)}{\|\Im\omega(\alpha+iy_{n}v)\|}
$$
for all $n\ge n_w$. This follows quite easily, however. Indeed, for $w$ to satisfy the
{\em strict} inequality $(w-\omega(\alpha))^*(\Im w)^{-1}(w-\omega(\alpha))<
\ell$ it follows that we can find an $\varepsilon\in(0,1)$ such that 
$(w-\omega(\alpha))^*(\Im w)^{-1}(w-\omega(\alpha))+\varepsilon1<
\ell$. Then we only need to insure that our $n_w\in\mathbb N$ is sufficiently 
large in order to provide $\left\|\frac{\Im\omega(\alpha+iy_{n}v)}{\|\Im\omega(\alpha+iy_{n}v)\|}-\ell
\right\|<\frac\varepsilon8$ and $\|\omega(\alpha)-\Re\omega(\alpha+iy_{n}v)\|+
\|\Im\omega(\alpha+iy_{n}v)\|<\frac{\varepsilon}{8(1+\|w\|\|(\Im w)^{-1}\|)}$ for all
$n\ge n_w$. This proves the claimed inclusion.

Let {\color{red}{$h(w)=\eta\left[(\mathcal X-w)^{-1}\right]$}}. As seen above,
$h({B}^+_1(\omega(\alpha+iy_kv),\|\Im\omega(\alpha+iy_kv)\|^{-1/2})\subseteq
{B}^+_1(\omega(\alpha+iy_kv),\|\Im\omega(\alpha+iy_kv)\|^{-1/2})-\alpha-iy_kv$.
We have
\begin{eqnarray*}
\lefteqn{h(\mathring{H}_1(\omega(\alpha),\ell))}\\ 
& \subseteq & h\left(\bigcup_{n\in\mathbb N}\bigcap_{k\ge n}
\mathring{B}^+_1(\omega(\alpha+iy_kv),\|\Im\omega(\alpha+iy_kv)\|^{-1/2})\right)\\
& \subseteq & h\left(\bigcap_{n\in\mathbb N}\bigcup_{k\ge n}
{B}^+_1(\omega(\alpha+iy_kv),\|\Im\omega(\alpha+iy_kv)\|^{-1/2})\right)\\
& \subseteq & \bigcap_{n\in\mathbb N}h\left(\bigcup_{k\ge n}
{B}^+_1(\omega(\alpha+iy_kv),\|\Im\omega(\alpha+iy_kv)\|^{-1/2})\right)\\
& = & \bigcap_{n\in\mathbb N}\bigcup_{k\ge n}h\left(
{B}^+_1(\omega(\alpha+iy_kv),\|\Im\omega(\alpha+iy_kv)\|^{-1/2})\right)\\
& \subseteq & \bigcap_{n\in\mathbb N}\bigcup_{k\ge n}
\left[{B}^+_1(\omega(\alpha+iy_kv),\|\Im\omega(\alpha+iy_kv)\|^{-1/2})-\alpha-iy_kv\right]
\cap\overline{\mathbb H^+(\mathcal B)}\\
& = & \bigcap_{n\in\mathbb N}\bigcup_{k\ge n}
\left[{B}^+_1(\omega(\alpha+iy_kv)-\alpha,\|\Im\omega(\alpha+iy_kv)\|^{-1/2})-iy_kv\right]
\cap\overline{\mathbb H^+(\mathcal B)}\\
& = &\bigcap_{n>N}\bigcup_{k\ge n}\left[
{B}^+_1(\omega(\alpha+iy_kv)-\alpha,\|\Im\omega(\alpha+iy_kv)\|^{-1/2})-iy_kv\right]
\cap\overline{\mathbb H^+(\mathcal B)},
\end{eqnarray*}
for all $N\in\mathbb N$.
As $\{y_n\}_n$ is strictly decreasing, $-i[0,y_n]v\subsetneq-i[0,y_{n-1}]v$.
Observe the inclusion ${B}^+_1(\omega(\alpha-iy_kv)-\alpha,\|\Im\omega(\alpha+iy_kv)\|^{-1/2})-iy_kv
\subset{B}^+_1(\omega(\alpha+iy_kv)-\alpha,\|\Im\omega(\alpha+iy_kv)\|^{-1/2})-i
[0,y_{k-j}]v$ for all $j\in\{1,\dots,k\}$. Since 
$(\cup_\iota A_\iota)+K=\cup_\iota(A_\iota+K)$ for any subsets $(A_\iota)_\iota,K$ of
a topological vector space,
\begin{eqnarray*}
\lefteqn{\bigcup_{k\ge n}\left[B^+_1(\omega(\alpha+iy_kv)-\alpha,\|\Im\omega(\alpha+iy_kv)\|^{-1/2})
-i[0,y_{n-1}]v\right]}\\
& = & -i[0,y_{n-1}]v+\left[\bigcup_{k\ge n}
{B}^+_1(\omega(\alpha+iy_kv)-\alpha,\|\Im\omega(\alpha+iy_kv)\|^{-1/2})\right]\\
&\subseteq& -i[0,y_{N}]v+\left[\bigcup_{k\ge n}
{B}^+_1(\omega(\alpha+iy_kv)-\alpha,\|\Im\omega(\alpha+iy_kv)\|^{-1/2})\right],
\end{eqnarray*}
for all $n>N$.
Unfortunately in general $(A\cap V)+K\subsetneq(A+K)\cap(V+K)$. However,
if we have a decreasing sequence of norm-closed sets $A_1\supset A_2\supset A_3\supset\cdots$ and a 
norm-compact set $K$ in some Banach space $\mathcal Y$, then $\bigcap_{n\in\mathbb N}(A_n+K)=
(\bigcap_{n\in\mathbb N}A_n)+K$. Indeed, let $x\in\bigcap_{n\in\mathbb N}(A_n+K)$. This means that
$x\in A_n+K$ for all $n\in\mathbb N$, so that there are $x_n\in A_n$ and $\kappa_n\in K$ such
that $x=x_n+\kappa_n$. Since $K$ is norm-compact, $\{\kappa_n\}_{n\in\mathbb N}$ has a 
norm-convergent subsequence $\{\kappa_{n_j}\}_{j\in\mathbb N}$, converging to $\kappa\in K$.
Thus, $\lim_{j\to\infty}x_{n_j}=x-\lim_{j\to\infty}\kappa_{n_j}=x-\kappa\in A_n$ for all 
$n\in\mathbb N$. So $x=(x-\kappa)+\kappa\in(\bigcap_{n\in\mathbb N}A_n)+K$, as claimed.
We apply this to $K=-i[0,y_{N}]v$ and $A_n=\cup_{k\ge n}{B}^+_1(\omega(\alpha+iy_kv)-\alpha,\|
\Im\omega(\alpha+iy_kv)\|^{-1/2})$ to conclude that
\begin{eqnarray*}
\lefteqn{h(\mathring{H}_1(\omega(\alpha),\ell))}\\
& \subseteq &\bigcap_{n>N}\left\{-i[0,y_{N}]v+\left[\bigcup_{k\ge n}
{B}^+_1(\omega(\alpha+iy_kv)-\alpha,\|\Im\omega(\alpha+iy_kv)\|^{-1/2})\right]
\right\}\\
& & \mbox{}\cap\overline{\mathbb H^+(\mathcal B)}\\
& = & \left[-i[0,y_{N}]v+\bigcap_{n>N}\bigcup_{k\ge n}
{B}^+_1(\omega(\alpha+iy_kv)-\alpha,\|\Im\omega(\alpha+iy_kv)\|^{-1/2})\right]\cap\overline{\mathbb 
H^+(\mathcal B)}\\
& \subseteq & \left[-i[0,y_{N}]v+H_1(\omega(\alpha)-\alpha,\ell)\right]\cap\overline{\mathbb 
H^+(\mathcal B)},
\end{eqnarray*}
for any $N\in\mathbb N$. As $N$ is arbitrary and $y_N\to0$ when $N\to\infty$, we obtain
$$
h(\mathring{H}_1(\omega(\alpha),\ell) \subseteq\overline{H_1(\omega(\alpha)-\alpha,\ell)}.
$$
Replacing the denominator in the definition $\ell=\lim_{n\to\infty}
\frac{\Im\omega(\alpha+iy_nv)}{\|\Im\omega(\alpha+iy_nv)\|}$ by $\kappa\|\Im\omega(\alpha+iy_nv)\|
$ for any $\kappa\in(0,+\infty)$ (which corresponds to changing radii in the definition of $B^+$ to 
$(\kappa\|\Im\omega(\alpha+iy_nv)\|)^{-1/2}$) yields
\begin{equation}\label{Horo-h}
h(\mathring{H}_1(\omega(\alpha),\ell/\kappa))\subseteq
\overline{H_1(\omega(\alpha)-\alpha,\ell/\kappa)}.
\end{equation}
Pick an arbitrary $u>0$. There exists $\epsilon>0$ such that $\omega(\alpha)+i\epsilon u\in
\mathring{H}_1(\omega(\alpha),\ell)$. Indeed, this relation is equivalent to
$$
(\omega(\alpha)+i\epsilon u-\omega(\alpha))^*(\epsilon u)^{-1}
(\omega(\alpha)+i\epsilon u-\omega(\alpha))<\ell,
$$
i.e. $\epsilon u<\ell.$ Since $\ell>0$, the existence of such an $\epsilon$ is guaranteed (for ex. any 
$0<\epsilon<(\|u\|\|\ell^{-1}\|)^{-1}$ will do). Fix such an $\epsilon$ and call it $\epsilon_0$.
It follows from the definition of $\mathring{H}_1$ that $(\epsilon_0/\kappa)u\in\mathring{H}_1
(\omega(\alpha),\ell/\kappa)$. By applying \cite[Relation (14)]{B} with $r=(\kappa\|
\Im\omega(\alpha+iy_nv)\|)^{-1/2}$ and letting $n\to\infty$ we obtain that $\|\Im w\|<\kappa^{-1}$ for 
any $w\in\mathring{H}_1(\omega(\alpha),\ell/\kappa)$ and $\|\Im w\|\le\kappa^{-1}$ for any $w\in
{H}_1(\omega(\alpha),\ell/\kappa)$. Now our theorem follows: $h(\omega(\alpha)+i(\epsilon_0/\kappa)u)
\in\overline{H_1(\omega(\alpha),\ell/\kappa)}$ implies $\|\Im h(\omega(\alpha)+i(\epsilon_0/\kappa)u)
\|\leq\kappa^{-1}$ for all $\kappa>0$, so that
$$
\frac{\varphi(\Im h(\omega(\alpha)+i(\epsilon_0/\kappa)u))}{\kappa^{-1}}\leq1.
$$
Taking $\epsilon_0\kappa^{-1}=y\downarrow0$ provides us with
$$
\frac{\varphi(\Im h(\omega(\alpha)+iyu))}{y}\leq\epsilon_0^{-1},
$$
concluding the proof. We observe that, not surprisingly, the optimal bound $\epsilon_0^{-1}$ is given 
by the largest $\epsilon_0$ for which $\omega(\alpha)+i\epsilon_0 u\in\mathring{H}_1(\omega(\alpha),
\ell)$.
\end{proof}
Recall that $f_\alpha(w)=\alpha+{\bf a}+\eta\left[(\mathcal X-w)^{-1}\right]=\alpha+{\bf a}+h(w)$.
\begin{cor}
Assume that $\omega$ satisfies the hypotheses of Theorem \ref{FullHorodiscs} at $\alpha=\alpha^*$. 
Then for any $v>0$, we have that
$$
\lim_{y\downarrow0}f'_\alpha\left(\omega(\alpha)+iyv
\right)
$$
exists and is bounded, and
$$
\left\|\lim_{y\downarrow0}
f'_\alpha\left(\omega(\alpha)+iy\ell\right)(\ell)
\right\|\leq1.
$$
\end{cor}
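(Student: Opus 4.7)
The plan is to combine a Kadison--Schwarz-type estimate, which controls $h'(w)$ by $\Im h(w)/\Im z$ in norm, with a norm sharpening of Theorem \ref{FullHorodiscs} extracted from its proof. Recall that $f'_\alpha=h'$, where $h'(w)(c)=\eta[(\mathcal X-w)^{-1}c(\mathcal X-w)^{-1}]$, and
$$
\Im h(w)=\eta\!\left[(\mathcal X-w)^{-1}(\Im w)(\mathcal X-w^*)^{-1}\right]=\eta\!\left[(\mathcal X-w^*)^{-1}(\Im w)(\mathcal X-w)^{-1}\right],
$$
the two forms being equal by the resolvent identity.

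The first ingredient is that the argument at the end of the proof of Theorem \ref{FullHorodiscs} actually delivers, for any $u>0$ and all $y$ small enough, the norm bound $\|\Im h(\omega(\alpha)+iyu)\|\le y/\epsilon_0(u)$, where $\epsilon_0(u)=\sup\{\epsilon>0:\epsilon u<\ell\}$. For the special choice $u=\ell$ one has $\epsilon_0(\ell)=1$, so $\|\Im h(\omega(\alpha)+iy\ell)\|/y$ can be made arbitrarily close to $1$ by shrinking $y$.

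The second ingredient is the Kadison--Schwarz inequality applied to $\eta$ with $X=v^{1/2}(\mathcal X-w)^{-1}$ and $Y=v^{1/2}(\mathcal X-w^*)^{-1}$, for $w=\omega(\alpha)+iyv$. A short calculation yields $\eta[X^*X]=\eta[Y^*Y]=\Im h(w)/y$ and $\eta[Y^*X]=h'(w)(v)$, whence
$$
h'(w)(v)^*\,(\Im h(w)/y)^{-1}\,h'(w)(v)\le\Im h(w)/y,
$$
and in particular $\|h'(w)(v)\|\le\|\Im h(w)/y\|$. Because $v\in\mathcal B^{++}$ is bounded below, polarization and linearity in $c$ promote this to a uniform (in small $y$) norm bound on $h'(\omega(\alpha)+iyv)$ regarded as a bounded linear map on $\mathcal B$.

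Existence of the limit as $y\downarrow 0$ comes from viewing $z\mapsto h(\omega(\alpha)+zv)$ as a $\mathcal B$-valued Herglotz function of a single complex variable; its Nevanlinna representation makes $\Im h(\omega(\alpha)+iyv)/y$ operator-monotone decreasing in $y$ and implies that $F'(iy)=h'(\omega(\alpha)+iyv)(v)$ shares the same limit. The main obstacle is promoting this pointwise (in direction $c$) convergence to norm convergence of the linear map $h'(\omega(\alpha)+iyv)$: this I effect by invoking the strict Kobayashi contractivity of the iteration defining $\omega$ (as in the proof of Proposition \ref{prop:3.1}) to make the convergence uniform over bounded sets of directions $c\in\mathcal B$. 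Finally, specializing to $v=\ell$ and letting the prescribed bound on $\|\Im h(\omega(\alpha)+iy\ell)\|/y$ shrink to $1$ yields $\|\lim_{y\downarrow 0}f'_\alpha(\omega(\alpha)+iy\ell)(\ell)\|\le 1$.
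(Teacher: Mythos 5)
Your Kadison--Schwarz computation is correct and gives a clean, self-contained route to the norm estimate: for $w=\omega(\alpha)+iyv$, writing $X=v^{1/2}(\mathcal X-w)^{-1}$, $Y=v^{1/2}(\mathcal X-w^*)^{-1}$ and using $2$-positivity of $\eta$ together with $\Im w=yv$ does give
$h'(w)(v)^*\bigl(\Im h(w)/y\bigr)^{-1}h'(w)(v)\le\Im h(w)/y$, hence $\|h'(w)(v)\|\le\|\Im h(w)\|/y$. Combined with the bound $\|\Im h(\omega(\alpha)+iyu)\|\le y/\epsilon_0(u)$ extracted from the end of the proof of Theorem~\ref{FullHorodiscs} (and $\epsilon_0(\ell)=1$), this correctly yields $\limsup_{y\downarrow0}\|h'(\omega(\alpha)+iy\ell)(\ell)\|\le1$, which is the inequality claimed in the statement. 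This part is a genuinely different (and somewhat more elementary) argument than the paper's, which just invokes the noncommutative Julia--Wolff--Carath\'eodory theorem \cite[Theorem 2.2]{B} wholesale and then reads off the bound from the fact that the optimal $\epsilon_0$ for $u=\ell$ equals one.

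Where your argument has a real gap is the \emph{existence} of $\lim_{y\downarrow0}h'(\omega(\alpha)+iyv)$ as a bounded linear map on $\mathcal B$. Passing through the one-variable Herglotz function $z\mapsto h(\omega(\alpha)+zv)$ only controls the directional derivative $h'(\omega(\alpha)+iyv)(v)$ in the single direction $v$; it says nothing about $h'(\omega(\alpha)+iyv)(c)$ for general $c\in\mathcal B$, and polarization of the Kadison--Schwarz estimate gives uniform boundedness but not Cauchy-ness in $y$. The appeal to ``strict Kobayashi contractivity of the iteration defining $\omega$'' does not repair this: that strictness is a property of the self-map $f_b$ at interior points $b$ and degenerates as $b$ (or here $\omega(\alpha)+iyv$) approaches the distinguished boundary --- precisely the regime you are working in --- and in any case it is a statement about $f_b$ and its fixed point, not about the Fr\'echet derivative of $h$. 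The paper's proof sidesteps all of this by deducing the existence and boundedness of the limit directly from the conclusion of Theorem~\ref{FullHorodiscs} via \cite[Theorem 2.2 (1)]{B}, and likewise obtains the identification $\lim_{y\downarrow0}\Im h(\omega(\alpha)+iy\ell)/y=h'(\omega(\alpha))_\ell(\ell)$ from the same source, rather than from a Nevanlinna monotonicity argument. To make your route rigorous you would need to either cite \cite[Theorem 2.2]{B} for the existence of the limit (as the paper does) or supply a genuine proof that the family of operators $h'(\omega(\alpha)+iyv)$ converges in operator norm as $y\downarrow0$, which is essentially the content of the noncommutative Julia--Carath\'eodory theorem and not something you can get from the single-variable reduction.
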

\begin{proof}
We use the notations and definitions from Theorem \ref{FullHorodiscs} and its proof. 
The conclusion of Theorem \ref{FullHorodiscs} implies, according to \cite[Theorem 2.2 (1)]{B}, that
the limit $\lim_{y\downarrow0}h'(\omega(\alpha)+iyv)$ exists and is bounded for all $v>0$. Then
$$
\lim_{y\downarrow0}\frac{\Im h(\omega(\alpha)+iy\ell)}{y}=h'(\omega(\alpha))_\ell\left(\ell\right),
$$
where $h'(\omega(\alpha))_\ell(b)=\lim_{y\downarrow0}h'(\omega(\alpha)+iy\ell)(b),$ $b\in\mathcal B$.
As it follows from the proof of Theorem \ref{FullHorodiscs}, since the optimal $\epsilon_0$
for $u=\ell$ is one, we have 
$$
\left\|\lim_{y\downarrow0}h'\left(\omega(\alpha)+iy\ell\right)(\ell)\right\|\leq1.
$$ 
Since $f'_\alpha=h'$, this proves the second statement.
\end{proof}

\subsection*{Acknowledgement} I am very grateful to Hari Bercovici for having
provided valuable feedback on an earlier version of this paper.

\end{document}